\theoremstyle{thmstyleone}%
\newtheorem{theorem}{Theorem}
\newtheorem{proposition}[theorem]{Proposition}%
\theoremstyle{thmstyletwo}%
\newtheorem{example}{Example}%
\newtheorem{conjecture}{Conjecture}%
\newtheorem{lemma}{Lemma}%
\theoremstyle{thmstylethree}%
\newtheorem{definition}{Definition}%
\newtheorem{notation}{Notation}%
\newcommand{\bco}{\begin{conjecture}}
\newcommand{\eco}{\end{conjecture}}
\newcommand{\bthe}{\begin{theorem}}
\newcommand{\ethe}{\end{theorem}}
\newcommand{\ble}{\begin{lemma}}
\newcommand{\ele}{\end{lemma}}
\newcommand{\bde}{\begin{definition}}
\newcommand{\ede}{\end{definition}}
\newcommand{\bno}{\begin{notation}}
\newcommand{\eno}{\end{notation}}
\newcommand{\beq}{\begin{equation}}
\newcommand{\eeq}{\end{equation}}
\begin{document}

\title[Some discrete subgroups of $SL(3,\mathbb{C})$]{Discrete subgroups generated by lattices in opposite horospherical subgroups of $SL(3,\mathbb{C})$ following Hee Oh}


\author*{\fnm{Eduardo} \sur{Montiel}}\email{eduardo.montiel@im.unam.mx}


\affil{\orgdiv{Instituto de Matem\'aticas}, \orgname{UNAM}, \orgaddress{\street{Av. Universidad}, \city{Cuernavaca}, \postcode{62210}, \state{Morelos}, \country{M\'exico}}}




\abstract{We prove that a discrete subgroup generated by two lattices in opposite minimal horospherical subgroups of SL(3,$\mathbb{C}$) is arithmetic and thus by a Borel and Harish-Chandra also a lattice. We follow the method and ideas used by Oh in \cite{Ref13}. There, Oh proves the same result for lattices in minimal horospherical subgroups of SL(n,$\mathbb{R}$).

The method consists of studying the orbits of the generating lattices in the horospherical subgroups under the conjugation action of the commutator of the corresponding Levi subgroup in order to obtain a rational structure for SL(3,$\mathbb{C}$). 

A theorem of Ratner reduces the possibilities for the closure of the orbits. Then, by the discreteness of the generated subgroup, it can be shown the closeness of the orbits.
 
Finally, using this information, it's possible to find a rational form such that the subgroup generated by the lattices is arithmetic.}

\keywords{arithmetic groups, algebraic groups,lattices}


\pacs[MSC Classification]{22E40}

\maketitle
\section{Introduction}\label{sec1}
In the 70s, Gregory Margulis in \cite{Ref11} proved his celebrated theorem of arithmeticity:

\bthe [Margulis' Arithmeticity Theorem]
Let $G$ be a connected semisimple algebraic $\mathbb{R}$-group without compact factors and with trivial center, of $\mathbb{R}$-rank greater than 1, and let $\Gamma$ be an irreducible non-uniform lattice in $G_\mathbb{R}$. Then $\Gamma$ is an arithmetic subgroup of $G$. More accurately, we can give to $G$ a   $\mathbb{Q}$-structure such that the groups $\Gamma$ and $G_\mathbb{Z}$ are commensurable. 
\ethe

This is a partial converse of a Borel and Harish-Chandra theorem, see \cite{Ref2},  that asserts if $G$ is an algebraic semisimple Lie group defined over $\mathbb{Q}$, then $G_\mathbb{Z}$ is a lattice of  $G_\mathbb{R}$. 

An important ingredient in the Margulis proof is the existence of lattices in opposite horospherical subgroups, the converse of the last fact was also conjectured by Margulis after hears a Selberg lecture at Yale in 1992 in the following form:

\bco
Let $G$ be an $\mathbb{R}$-group semisimple of real rank at least two, $U_1$ and $U_2$ a pair of opposite horospherical subgroups of $G(\mathbb{R})$, and $F_i$ a lattice of $U_i$ such that for any proper normal subgroup $H$ of $G(\mathbb{R})$, $H \cap F_{i}$ is finite for $i=1,2$. Then, if the subgroup $\Gamma_{F_{1},F_{2}}$ generated by $F_{1}$ and $F_{2}$ is discrete, there exists a $\mathbb{Q}$-form of $G$ such that $U_i$ is defined over $\mathbb{Q}$ and $F_{i}$ is commensurable to $U{_i(\mathbb{Z})}$  for each $i=1,2$ and  $\Gamma_{F_{1},F_{2}}$ is an arithmetic subgroup.   
\eco

Hee Oh was a pioneer in studying this conjecture, in her first approximation, see \cite{Ref13}, she solved the problem positively for minimal opposite horospherical subgroups of $SL(n,\mathbb{R})$, and later, for many cases in \cite{Ref14}. One of the missing cases in this last approach occurs when the horospherical subgroups are Heisenberg subgroups of $SL(3,\mathbb{R})$, but it was proved in \cite{Ref1} by Oh and Benoist using ideas of Selberg.

In this paper, we follow the work of Oh in \cite{Ref13} to prove the following result:

\bthe
Let $F_{1}$ and $F_{2}$ be lattices in minimal opposite horospherical subgroups of $SL(3,\mathbb{C})$, if $<F_1,F_2>$, the subgroup generated by $F_1$ and $F_2$, is discrete then  is arithmetic.  
\ethe

A pair of minimal horospherical subgroups of $SL(3,\mathbb{C})$ is conjugate to either: 

\begin{equation} \notag \left\{ U=\left\{ \left(\begin{array}{ccc}
1 & * & *\\
0 & 1 & 0\\
0 & 0 & 1
\end{array}\right)\right\} ,U^{-}=\left\{ \left(\begin{array}{ccc}
1 & 0 & 0\\
* & 1 & 0\\
* & 0 & 1
\end{array}\right)\right\} \right\} 
\end{equation}
 
or 

\begin{equation} \notag
  \left\{ U=\left\{ \left(\begin{array}{ccc}
1 & 0 & * \\
0 & 1 & *\\
0 & 0 & 1
\end{array}\right)\right\} ,  U^{-}=\left\{ \left(\begin{array}{ccc}
1 & 0 & 0\\
0 & 1 & 0\\
* & * & 1
\end{array}\right)\right\} \right\} 
 \end{equation}

So, the result in this work keeps some similarity with the combination theorems because it gives a criterion for discreetness of the generated subgroup of two lattices. Indeed the study of discrete free subgroups generated by a pair of non-commutative parabolic subgroups of $PSL(2,\mathbb{C})$ is the one that maintains the most similarities with the result of Oh, we recall that study; up to conjugation we can always write a pair of non-commutative parabolic subgroups of $PSL(2,\mathbb{C})$ like 

\begin{equation}\notag 
X=\left(\begin{array}{cc}
1 & 1\\
0 & 1
\end{array}\right)
 , Y_{\rho}=\left(\begin{array}{cc}
1 & 0\\
\rho & 1
\end{array}\right)
\end{equation}

for some $\rho \in \mathbb{C} \setminus \{0\}$. Then we get a parametric space obtained by identifying each $\rho$ with the class of the subgroup $\Gamma_{\rho}=<X,Y_{\rho}>$. The set of parameters $\rho$ for which $\Gamma_{\rho}$ is discrete and free, is known as the Riley slice. 

Some experts in the study of this set are J. Parker, B. Maskit, L. Keen, C.Series and Y. Komori. 

Parker and Will in \cite{RefPW} study a set analogous of the Riley slice. Specifically, they study subgroups of $PU(2,1)$ generated by a pair of non-commutative unipotent  transformations $A,B$ such that the product $AB$ is also unipotent. As in the classic case, they define a set called the complex hyperbolic Riley slice, that consist of classes of conjugation for which the generated subgroup is discrete and free.  

In section 2, we give a lot of preliminary material with the intention to make the  article self-contained and to exhibit classical results in the theory of discrete groups. That section contains, for example, a beautiful theorem of Ratner with a dynamical flavor.  

In section 3, we present the proof of the arithmeticity of the group generated by  $F_1$ and $F_2$. The idea is to obtain a rational structure in $G$ that fits with the rational structures in the opposite horospherical groups  that arises naturally of the lattices  $F_1$ and $F_2$.

\section{Preliminaries}\label{sec2}

We recall some definitions, results and notation.  

\subsection{Algebraic groups}
\label{subsec1}

The main references for this subsection are \cite{RefBT} and \cite{RefHL}. 

\bde 
A subgroup $G \subset GL(n,\mathbb{C})$ is a linear algebraic group if there is a set $\mathcal{A}$ of polynomials on $M(n,\mathbb{C})$ so that 
 \begin{equation}\notag
 G=\{g \in GL(n,\mathbb{C}) : f(g)=0, \forall f\in  \mathcal{A}\}.
 \end{equation}
\ede

 We recall the concepts of $\mathbb{Q}$-groups and $\mathbb{Q}$-forms.

\bde See \cite[p.~217]{RefHL} or \cite[p.~57]{RefBT}. 
 Let $G$ be an algebraic subgroup of $GL(n,\mathbb{C})$ and $k$ a subfield of  $\mathbb{C}$, we say that $G$ is defined over $k$  (or that $H$ is a $k$-group) if $G$ consists of all matrices whose entries annihilate some set of polynomials on $M(n,\mathbb{C})$ with coefficients in $k$. 
\ede
  
\bde See \cite[p.~106]{RefBT} or \cite[p.~625]{Ref14}. Let $G$ be a $\mathbb{R}$-group and $k$ a subfield of $\mathbb{R}$. A $k$-form of $G$ is a pair $(\tilde{G},f)$  where $\tilde{G}$ is an algebraic group defined over $k$ and $f:G \rightarrow \tilde{G}$ is an isomorphism defined over $\mathbb{R}$ \footnote{An isomorphism $f$ is defined over $\mathbb{R}$ if the coordinate functions of $f$ all lie in $\mathbb{R}[T_1,..T_n]$, where $n$ is the dimension of $\tilde{G}$ as variety, see \cite[p.~218]{RefHL}.}.   
\ede

\bno See \cite[p.~625]{Ref14} or \cite[p.~376]{Ref10}
For $G$ an algebraic subgroup of \;$GL(n,\mathbb{C})$, $J$ a subring of\; $\mathbb{C}$ and $k$ a subfield of\; $\mathbb{C}:$ 

\begin{enumerate}
	\item $G(J)$ denotes the elements of $G$ with matrix coefficients belonging to $J$ whose determinant is a unit of $J$;
	\item If $G$ is defined over $\mathbb{R}$ and has a $k$-form $(\tilde{G},f)$, we denote $f(\tilde{G}(J))$ by $G(J)$;  
	
  \item If an algebraic $\mathbb{R}$-subgroup $H$ of $G$ is such that $f^{-1}(H)$ is a $k$-subgroup of $\tilde{G}$, we say that $H$ is defined over $k$ and we denote $H \cap G(J)$ by $H(J)$.
\end{enumerate}
\eno

\begin{theorem} See \cite[p.~36]{RefZE} 
Let $k$ be a subfield of $\mathbb{C}$ and $G \subset GL(n,\mathbb{C})$ an algebraic group. If $G \cap GL(n,k)$ is dense Zariski in $G$, then $G$ is defined over $k$.  
\end{theorem}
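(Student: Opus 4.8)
The plan is to run the standard field-of-definition descent argument: I would exhibit an algebraic subgroup of $GL(n,\mathbb{C})$ that is visibly defined over $k$ and then show, using only the density hypothesis and linear algebra over $k$, that it equals $G$. First I would record the hypothesis in ideal-theoretic form. Write $R=\mathbb{C}[GL(n,\mathbb{C})]=\mathbb{C}[x_{ij}][\det^{-1}]$ for the coordinate ring and $R_{k}=k[x_{ij}][\det^{-1}]$ for its natural $k$-form, so that $R=R_{k}\otimes_{k}\mathbb{C}$, and let $I=I(G)$ be the vanishing ideal of $G$ in $R$. Since $G\cap GL(n,k)\subseteq G$ automatically, the assumption that $G\cap GL(n,k)$ is Zariski dense in $G$ means exactly that $\overline{G\cap GL(n,k)}=G$, hence that $I\bigl(G\cap GL(n,k)\bigr)=I$; in particular $I$ is precisely the set of $f\in R$ that vanish at every point of $G\cap GL(n,k)$.

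Next I would introduce the candidate $k$-group. Set $J:=I\cap R_{k}$, an ideal of $R_{k}$, and let $G':=V_{\mathbb{C}}(J)\subseteq GL(n,\mathbb{C})$ be its complex zero locus. After clearing the denominators $\det^{m}$, which have integer coefficients, $G'$ is cut out inside $GL(n,\mathbb{C})$ by polynomials on $M(n,\mathbb{C})$ with coefficients in $k$, so $G'$ is defined over $k$ in the sense recalled in \S\ref{subsec1}; it therefore suffices to prove $G=G'$. One inclusion is immediate: $J\subseteq I$, so every element of $J$ vanishes on $G$ and thus $G\subseteq V_{\mathbb{C}}(J)=G'$.

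The substance is the reverse inclusion, which I would deduce from the ideal identity $I=JR$ (the ideal of $R$ generated by $J$); granting it, $G=V_{\mathbb{C}}(I)=V_{\mathbb{C}}(J)=G'$. Only the inclusion $I\subseteq JR$ needs an argument, and since $JR=J\otimes_{k}\mathbb{C}$ when both are viewed inside $R=R_{k}\otimes_{k}\mathbb{C}$, the claim is that the vanishing ideal $I$ of $G\cap GL(n,k)$ in $R$ is the extension of scalars to $\mathbb{C}$ of the vanishing ideal $J$ of the same set in $R_{k}$. Now $I=\bigcap_{p}\ker(\mathrm{ev}_{p})$, the intersection of the evaluation functionals $\mathrm{ev}_{p}\colon R\to\mathbb{C}$ taken over all $p\in G\cap GL(n,k)$; because $p$ has entries in $k$, $\mathrm{ev}_{p}$ carries $R_{k}$ into $k$, so $\mathrm{ev}_{p}$ is the extension of scalars of $\mathrm{ev}_{p}|_{R_{k}}$ and hence $\ker(\mathrm{ev}_{p})=\bigl(\ker(\mathrm{ev}_{p})\cap R_{k}\bigr)\otimes_{k}\mathbb{C}$ is a $k$-rational hyperplane of $R$. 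An intersection of $k$-rational subspaces of $R$ is again $k$-rational and equals the extension of scalars of the intersection of their traces on $R_{k}$; that intersection of traces is $\{\,f\in R_{k}:f|_{G\cap GL(n,k)}=0\,\}$, which equals $I(G)\cap R_{k}=J$ because $\overline{G\cap GL(n,k)}=G$. Therefore $I=J\otimes_{k}\mathbb{C}=JR$, and the proof is complete.

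I do not expect a genuine obstacle; the one point that deserves care is the assertion that an intersection of a possibly infinite family of $k$-rational subspaces of $R=R_{k}\otimes_{k}\mathbb{C}$ is again $k$-rational. For a finite family this is routine, and one reduces to that case by using that $R_{k}$ is Noetherian, so that finitely many of the points $p$ already cut out the ideal $J$; alternatively one verifies it directly by fixing a $k$-basis of $\mathbb{C}$ and comparing coordinates, which reduces it to the uniqueness of the expansion of a vector along such a basis. Every other step is a formal manipulation with coordinate rings and extension of scalars, so this is the only place I would need to be careful.
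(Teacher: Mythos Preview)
The paper does not supply its own proof of this statement; it is quoted as background material with a reference to Zimmer \cite[p.~36]{RefZE}, so there is no in-paper argument to compare against. Your proof is correct and is essentially the standard field-of-definition descent one finds in that reference: reduce to showing $I(G)=(I(G)\cap R_{k})\otimes_{k}\mathbb{C}$, and obtain this from the fact that $I(G)$ is the intersection of the evaluation kernels at the $k$-rational points, each of which is a $k$-rational hyperplane. The only delicate point, as you yourself flag, is the commutation of intersection with $\otimes_{k}\mathbb{C}$; your two suggested justifications (Noetherian reduction to finitely many points, or a direct coordinate argument using a $k$-basis of $\mathbb{C}$) both work, the second being cleanest since field extensions are faithfully flat. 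One small cosmetic remark: you never explicitly verify that $G'=V_{\mathbb{C}}(J)$ is a subgroup, but since you end by proving $G'=G$ this is automatic and need not be checked separately.
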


\bde  \cite[p.~134,135]{RefHL} Let $G$ be an algebraic group.
\begin{enumerate}
	\item  If $B$ is a maximal closed connected solvable subgroup, $B$ is called a Borel subgroup.
	\item  If $P$ is a closed subgroup such that $P$ contains a Borel subgroup, then $P$ is called a parabolic subgroup of $G$.
\end{enumerate}
  
\ede

A Borel subgroup $B$ is a parabolic subgroup, and we often refer to it as a minimal parabolic subgroup. \\

By the Lie-Kolchin theorem we known that the only (up to conjugacy) parabolic subgroups of $G=SL(3,\mathbb{C})$ are $G$, $B$ the upper triangular group of $G$, and the groups 
\begin{equation}\notag
\left\{ \left(\begin{array}{ccc}
* & * & *\\
0 & * & *\\
0 & * & *
\end{array}\right)\right\} \; , \; \left\{ \left(\begin{array}{ccc}
* & * & *\\
* & * & *\\
0 & 0 & *
\end{array}\right)\right\}.
\end{equation}

\begin{definition} See \cite[p.~125]{RefHL} or \cite[p.~59]{RefBT}
Let $G$ be an algebraic group . We define: 
\begin{description}	

\item[(a)] the \textbf{radical} $R(G)$ of $G$ as the largest connected normal solvable subgroup  (closed) of $G$.
\item[(b)] the \textbf{unipotent radical } $R_u(G)$ of $G$  as the largest connected normal   unipotent (closed) subgroup of $G$.
\item[(c)] A group $G$ is called \textbf{semisimple }(resp. \textbf{reductive}) if $R(G)=\{e\}$ (resp. $R_u(G)=\{e\}$) .
\end{description}
\end{definition}

For example the group $GL(n,\mathbb{C})$ is reductive and $SL(n,\mathbb{C})$ is semisimple.

\begin{theorem} [Levi decomposition] \cite[p.~185]{RefHL}  or \cite[p.~57]{RefBT}
Let $G$ be a reductive group and $P$ a parabolic subgroup of $G$. Then $P=L \ltimes V$, where $V=R_u(P)$. If\: $L^{\prime}$ is another subgroup such that $P=L^{\prime} \ltimes V$ then $L^{\prime}$ is conjugate to $L$ by an element of $V$. We call $L$ a Levi subgroup.  
\end{theorem}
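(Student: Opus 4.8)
The plan is to prove the decomposition by passing to a maximal torus and the associated root data, and then to obtain the uniqueness assertion through a cohomological splitting argument. Since $P$ contains a Borel subgroup, it contains a maximal torus $T$ of $G$; let $\Phi = \Phi(G,T)$ be the root system and, for each root $\alpha$, let $U_\alpha$ be the corresponding one-dimensional root subgroup. The parabolic $P$ is determined by the set $\Psi = \{\alpha \in \Phi : U_\alpha \subseteq P\}$, which is closed under the addition of roots and satisfies $\Psi \cup (-\Psi) = \Phi$. I would split $\Psi$ into its symmetric part $\Psi^{s} = \Psi \cap (-\Psi)$ and its one-sided part $\Psi^{u} = \Psi \setminus \Psi^{s}$.

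First I would define $V$ as the subgroup generated by the $U_\alpha$ with $\alpha \in \Psi^{u}$ and show that it equals $R_u(P)$. Checking that $\Psi^{u}$ is stable under addition inside $\Phi$ and under the Weyl group of $\Psi^{s}$ shows that $V$ is a connected unipotent group normalized by $T$ and by every $U_\beta$, $\beta \in \Psi$, hence normal in $P$. Maximality (that $V$ is the \emph{full} unipotent radical, not merely some normal unipotent subgroup) follows because $P/V$ is generated by the images of $T$ and of the $U_\alpha$ with $\alpha \in \Psi^{s}$, and this quotient is reductive since $\Psi^{s}$ is a symmetric closed subsystem.

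Next I would define $L$ as the subgroup generated by $T$ together with the root groups $U_\alpha$ for $\alpha \in \Psi^{s}$, and identify it with the centralizer $Z_G(S)$ of the connected central torus $S = \big(\bigcap_{\alpha \in \Psi^{s}} \ker\alpha\big)^{\circ}$. As the centralizer of a torus in a connected reductive group is connected and reductive, $L$ is a reductive subgroup of $P$. I would then show that the product morphism $L \times V \to P$ is an isomorphism of varieties: injectivity comes from $L \cap V = \{e\}$ (the torus $S$ centralizes $L$ but acts on $\mathrm{Lie}(V) = \bigoplus_{\alpha \in \Psi^{u}} \mathfrak{g}_\alpha$ with only nonzero weights, so $L \cap V \subseteq Z_V(S) = \{e\}$), while surjectivity follows from a dimension count together with the factorization of any element of $P$ into root-group factors indexed by $\Psi^{s}$ and $\Psi^{u}$. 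Since $V$ is normal, this gives the semidirect product $P = L \ltimes V$.

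For uniqueness, given another complement $L'$ with $P = L' \ltimes V$, both $L$ and $L'$ project isomorphically onto the reductive quotient $\pi : P \to P/V$, so they correspond to two splittings $\sigma,\sigma'$ of $\pi$. I would encode the difference by the map $c(\ell) = \sigma(\ell)\,\sigma'(\ell)^{-1}$, a $1$-cocycle for the conjugation action of $L$ on $V$; showing that $c$ is a coboundary is exactly the statement that $L' = vLv^{-1}$ for some $v \in V$. The step I expect to be the main obstacle is precisely this conjugacy: since we work in characteristic zero, $L$ is linearly reductive and $V$ unipotent, so the pointed set $H^{1}(L,V)$ is trivial, but carrying this out concretely requires either invoking Mostow's theorem on conjugacy of maximal reductive (Levi) subgroups, or an induction along the descending central series of $V$, splitting the cocycle one abelian layer $\mathfrak{v}_i/\mathfrak{v}_{i+1}$ at a time and using the vanishing of $H^{1}$ for the reductive $L$ on each quotient. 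Ensuring that the partial conjugations assemble into a single element of $V$ is where the argument is most delicate.
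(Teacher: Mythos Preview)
The paper does not actually give a proof of this theorem: it is stated as a preliminary result with references to Humphreys \cite[p.~185]{RefHL} and Borel--Tits \cite[p.~57]{RefBT}, and no argument follows. So there is no ``paper's own proof'' against which to compare your proposal.

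That said, your outline is essentially the standard route taken in those references. The construction of $V$ and $L$ via the decomposition $\Psi = \Psi^{s} \sqcup \Psi^{u}$ and the identification $L = Z_G(S)$ is exactly the Borel--Tits approach, and your dimension-plus-factorization argument for $L \times V \xrightarrow{\sim} P$ is the usual one. For the conjugacy of Levi factors, the references typically argue more directly: any two complements $L, L'$ contain maximal tori of $P$, which are conjugate by an element of $V$ (since $V$ is the unipotent radical and maximal tori in a connected solvable group are conjugate by unipotent elements); after this conjugation $L$ and $L'$ share a maximal torus $T$, and then both equal the subgroup generated by $T$ and the root groups $U_\alpha$ with $\pm\alpha \in \Psi$, hence coincide. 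Your cohomological alternative (triviality of $H^{1}(L,V)$ by linear reductivity, or the Mostow induction on the central series of $V$) is correct in characteristic zero, which is the setting of this paper, but it is slightly heavier machinery than needed and would not extend to positive characteristic as written. Either way, your sketch is sound and nothing is missing that would make the argument fail.
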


\begin{definition} \cite[p.~88]{RefBT}
Let $G$ be a reductive group. Two parabolic subgroups $P^+$ y $P^-$ are called opposite if their intersection  is a Levi subgroup of both, i.e. if $P^+=L \ltimes V^+$ and $P^-=L \ltimes V^-$ where $L=P^+ \cap P^-$.
\end{definition}

\bde \cite[p.~625]{Ref14}
Let $P$ be a parabolic subgroup of $SL(n,\mathbb{C})$, the subgroup $R_u(P)$ is called a horospherical group.  
\ede 
 
 We call a parabolic group maximal if it is the greatest group among all parabolic groups distinct of the whole group. A horospherical group is called minimal if it is the unipotent radical of a maximal parabolic subgroup. \\
 
  In $SL(3,\mathbb{C})$ every pair composed by two opposite minimal horospherical  subgroups is conjugate to a pair $U$, $U^-$ with \begin{equation}\notag U=\left\{ \left(\begin{array}{ccc}
I_m & M_{m \times k }(\mathbb{C}) \\
0 & I_k \\
\end{array}\right)\right\} \quad\mathrm{and} \quad
U^{-}=\left\{ \left(\begin{array}{ccc}
I_m & 0 \\
M_{k \times m} (\mathbb{C})  & I_k\\
\end{array}\right)\right\} 
 \end{equation} 

for some $k,m \in \{1,2\}$ such that $k+m=3$.

\subsection{Lattices and arithmetic groups}
\label{subsec2}

For this section we has taken as references \cite{RefMORR} and \cite{RefBI}  

\bde \cite[p.~21]{RefRD}
Let $G$ be a Lie group and $\Gamma$ a discrete subgroup of $G$. We say that $\Gamma$ is a lattice in $G$ if $G/ \Gamma$ has a finite volume. In particular, if $G/\Gamma$ is compact we say that $\Gamma$ is a cocompact lattice or a uniform lattice in $G$.   
\ede 

For example, $\mathbb{Z}^{n}$, $SL(n,\mathbb{Z})$ and $SL(n,\mathbb{Z}[i])$ are lattices in $\mathbb{R}^{n}$, $SL(n,\mathbb{R})$ and $SL(n,\mathbb{C})$, respectively.

We recall the Iwasawa decomposition for $SL(n,\mathbb{R})$ and $SL(n,\mathbb{C})$. 

\begin{theorem} [Iwasawa's decomposition] 
Let $G=SL(n,\mathbb{R})$ (resp. $G=SL(n,\mathbb{C})$). Let $K=SO(n)$ (resp. $K=SU(n)$ ), $A$ be the group that consists of all diagonal matrices with positive eigenvalues and determinant 1, and $N$ the group that consists of all the unipotent upper triangular matrices.   
The map $(k,a,n) \mapsto k \cdot a \cdot n$ is a diffeomorphism of $K \times A \times N$ onto $G$. 
\end{theorem}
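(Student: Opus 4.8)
The plan is to prove this concrete form of the Iwasawa decomposition directly by Gram--Schmidt orthonormalization, which yields existence, uniqueness, and smoothness in one stroke. I write the argument for $G=SL(n,\mathbb{R})$ with the Euclidean inner product; the case $G=SL(n,\mathbb{C})$ is identical upon replacing it by the standard Hermitian inner product and $SO(n)$, $O(n)$ by $SU(n)$, $U(n)$.

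First I would establish existence. Given $g\in G$ with columns $v_1,\dots,v_n$, these form a basis of $\mathbb{R}^n$ since $\det g\neq 0$. Applying Gram--Schmidt to $(v_1,\dots,v_n)$ in order produces an orthonormal basis $(e_1,\dots,e_n)$ such that each $v_j$ lies in the span of $e_1,\dots,e_j$; hence $v_j=\sum_{i\le j} r_{ij}e_i$ with $r_{jj}=\|u_j\|>0$, where $u_j$ is the $j$-th Gram--Schmidt vector before normalization. Letting $Q$ be the matrix with columns $e_1,\dots,e_n$ and $R=(r_{ij})$, this reads $g=QR$ with $Q$ orthogonal and $R$ upper triangular with positive diagonal. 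Factoring $R=DN$, with $D$ the diagonal part and $N=D^{-1}R$ unipotent upper triangular, gives $g=QDN$. I then use $\det g=1$ to place the factors in the correct groups: since $\det D>0$ and $\det N=1$, one has $\det Q=(\det D)^{-1}>0$, so $\det Q=1$ (as $Q$ is orthogonal), whence $Q\in SO(n)$ and $\det D=1$, i.e. $D\in A$. In the complex case $|\det Q|=1$ together with $\det Q$ real and positive again forces $Q\in SU(n)$ and $D\in A$. Thus $(Q,D,N)\in K\times A\times N$ realizes $g=QDN$.

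For uniqueness, suppose $k_1a_1n_1=k_2a_2n_2$. Then $k_2^{-1}k_1=a_2\,(n_2n_1^{-1})\,a_1^{-1}$ is simultaneously orthogonal (resp. unitary) and upper triangular with positive diagonal. An upper triangular orthogonal (resp. unitary) matrix must be diagonal, since its inverse equals both its transpose (resp. conjugate transpose), which is lower triangular, and its algebraic inverse, which is upper triangular; and a diagonal orthogonal/unitary matrix with positive diagonal is the identity. Hence $k_1=k_2$; cancelling then gives $a_2^{-1}a_1=n_2n_1^{-1}$, a diagonal matrix equal to a unipotent one, forcing $a_1=a_2$ and $n_1=n_2$. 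To upgrade this bijection to a diffeomorphism I note that the multiplication map $(k,a,n)\mapsto kan$ is polynomial, hence smooth, while its inverse is smooth because Gram--Schmidt is an explicit smooth procedure: the norms $\|u_j\|$ are smooth and nowhere vanishing on $G$, and $Q,D,N$ are built from $g$ using these norms, the inner products $\langle v_i,u_j\rangle$, and divisions, all smooth. A dimension count ($\dim K+\dim A+\dim N=n^2-1=\dim G$ in the real case, and $2(n^2-1)$ in the complex case) is consistent with this.

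The main obstacle I expect is not any single step but the determinant bookkeeping that keeps the normalization consistent across the three factors, specifically verifying that the unitary factor produced by complex Gram--Schmidt genuinely lands in $SU(n)$ rather than merely $U(n)$. This is exactly where the hypothesis $\det g=1$ enters and must be tracked carefully; the smoothness of the inverse, by contrast, is essentially automatic once one observes that the Gram--Schmidt norms never vanish on $G$.
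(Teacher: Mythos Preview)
Your proof is correct and is the standard Gram--Schmidt argument for the concrete Iwasawa decomposition of $SL(n,\mathbb{R})$ and $SL(n,\mathbb{C})$. The existence, uniqueness, and smoothness steps are all sound, and your handling of the determinant bookkeeping in the complex case (forcing the unitary factor into $SU(n)$ via $\det Q = (\det D)^{-1}$ real and positive together with $|\det Q|=1$) is exactly the point that needs care and you treat it correctly.

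There is nothing to compare against, however: the paper states this theorem in its preliminaries section as a classical fact and does not supply a proof. It is simply recalled in order to set up the Siegel sets $\mathfrak{G}_{t,u}=K\cdot A_t\cdot N_u$ immediately afterwards. So your write-up stands on its own as a valid proof of a result the paper only quotes.
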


Let $A,N$ as above, given two positive numbers $t,u$, we construct the sets:

\begin{equation}\notag
A_t=\{ a\in A^* : a_{ii} \leq t \cdot a_{jj} \quad (i=1, \ldots n-1) \} 
\end{equation} 
\begin{equation}\notag
N_u=\{ n \in N : \lvert n_{ij} \rvert \leq u  \quad (1 \leq i<j \leq n) \}
\end{equation}
 
 We call Siegel set of $G$ to a subset of the form $\mathfrak{G}_{t,u}=K \cdot A_t \cdot N_u$. The Siegel sets have finite Haar measure, for a proof in the real case see \cite[p.~17]{RefBI} and \cite[p.~17]{Ref15} for the complex case.  This fact and the next theorem show that $SL(n,\mathbb{Z})$ is a lattice in $SL(n,\mathbb{R})$. 
 
 \begin{theorem}\cite[p.~13]{RefBI}
 $SL(n,\mathbb{R})=\mathfrak{G}_{\frac{2}{\sqrt{3}},\frac{1}{2}} \cdot SL(n,\mathbb{Z})$.
 \end{theorem}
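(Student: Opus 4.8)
The plan is to prove the identity by \emph{reduction theory}: for each $g \in SL(n,\mathbb{R})$ I produce an integer matrix $\gamma \in SL(n,\mathbb{Z})$ carrying $g\gamma$ into the Siegel set, arguing by induction on $n$. First I would invoke the Iwasawa decomposition to write, for every $\gamma \in SL(n,\mathbb{Z})$, the factorization $g\gamma = k(\gamma)\,a(\gamma)\,n(\gamma)$ with $k(\gamma) \in SO(n)$, $a(\gamma) \in A$, and $n(\gamma) \in N$. Since $K = SO(n)$ is the full compact factor, the piece $k(\gamma)$ always lies in $K$ and imposes no condition; the task reduces to choosing $\gamma$ so that the diagonal part $a(\gamma)$ satisfies the defining inequalities of $A_t$ and the unipotent part $n(\gamma)$ those of $N_u$, with $t = \frac{2}{\sqrt3}$ and $u = \frac12$. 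Here I use that the first diagonal entry satisfies $a_1(\gamma) = \|g\gamma e_1\|$, because $n(\gamma)$ is upper unitriangular and $k(\gamma)$ orthogonal.

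The engine is a minimization. As $\gamma$ ranges over $SL(n,\mathbb{Z})$, the first columns $g\gamma e_1$ range over all primitive vectors of the lattice $\Lambda = g\mathbb{Z}^n$; since $\Lambda$ is discrete, the set of their norms has a positive minimum, attained by a vector which is automatically primitive (a shorter divisor would contradict minimality) and hence completes to a basis. I would fix $\gamma$ so that $g\gamma e_1$ realizes a shortest nonzero vector of $\Lambda$, so that $a_1 = \|g\gamma e_1\|$ is minimal. The characteristic constant $t = \frac{2}{\sqrt3}$ enters through a two-dimensional estimate: after replacing the second column by its sum with a suitable integer multiple of the first, so that the component of the new second column along $g\gamma e_1$ has length at most $\frac12 a_1$, minimality of $a_1$ forces $a_1^2 \le \frac14 a_1^2 + a_2^2$, whence $a_1 \le \frac{2}{\sqrt3}\,a_2$. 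This is exactly the Hermite constant in dimension two and gives the first instance of the $A_t$ inequality $a_{11} \le t\,a_{22}$.

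With a shortest vector fixed as the first column, I would pass to the orthogonal complement of $g\gamma e_1$: the images of the remaining columns span a rank-$(n-1)$ lattice there, and the induction hypothesis, applied after the obvious normalization of covolume (working in the $GL(n-1)$ setting and specializing, since the $A_t$ and $N_u$ conditions are invariant under scaling), yields a block matrix $\mathrm{diag}(1,\gamma')$ with $\gamma' \in SL(n-1,\mathbb{Z})$ that reduces the ratios among $a_2,\dots,a_n$ and the unipotent entries $n_{ij}$ with $i,j \ge 2$, all without disturbing the first column or the value $a_1$. Finally the first-row unipotent entries $n_{1j}$ are brought into $[-\frac12,\frac12]$ by adding integer multiples of the first column to the others, i.e. by right multiplication with an upper unitriangular integer matrix fixing $e_1$; this yields $|n_{1j}| \le \frac12 = u$ while leaving $a_1$ and the lower-right block untouched. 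Assembling the factors shows $g\gamma \in K A_t N_u = \mathfrak{G}_{\frac{2}{\sqrt3},\frac12}$, which is the assertion.

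I expect the main obstacle to be organizing the induction so that the successive reductions do not interfere: one must order the operations carefully (shortest vector first, then the orthogonal-complement block, then the first-row unipotent entries) so that each step preserves what the earlier steps achieved, and one must verify that the projection to the orthogonal complement of $g\gamma e_1$ produces a genuine lattice to which the inductive statement applies after rescaling the covolume $1/a_1$ to $1$. The other delicate point is the derivation of the sharp constant $\frac{2}{\sqrt3}$ from the two-dimensional minimization, since it is precisely there that the specific value appearing in the statement is forced, together with the rounding that produces $u = \frac12$.
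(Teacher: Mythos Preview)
The paper does not prove this theorem at all; it is stated in the preliminaries with a bare citation to Borel's \emph{Introduction aux groupes arithm\'etiques} (p.~13) and used only to observe that Siegel sets of finite measure cover $SL(n,\mathbb{R})/SL(n,\mathbb{Z})$. There is therefore nothing in the paper to compare your argument against.

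That said, your sketch is the classical Hermite--Siegel reduction argument, which is exactly the proof given in the cited reference: pick a shortest primitive vector of $g\mathbb{Z}^n$ as the first column, extract the constant $2/\sqrt{3}$ from the two-dimensional inequality $a_1^2 \le \tfrac14 a_1^2 + a_2^2$, induct on the orthogonal complement for the lower-right $(n-1)\times(n-1)$ block, and finish by translating the first row into $[-\tfrac12,\tfrac12]$. The ordering you describe (shortest vector, then block, then first-row shears) is the standard one and the steps do not interfere, so the proposal is correct and coincides with Borel's proof.
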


We denote by $\mathcal{R}$ the set of lattices in $\mathbb{R}^n$, this set is identified with $GL(n,\mathbb{R})/GL(n,\mathbb{Z})$. We define $\triangle$ as the map that sends a lattice $\Gamma$ to the volume of the quotient $\mathbb{R}^n / \Gamma$, more explicitly, if $\Gamma=g(\Gamma_0)$ for some representative $g\in GL(n,\mathbb{R})$ and $\Gamma_0=\mathbb{Z}^n$, then $\triangle(\Gamma)= \lvert \det g \rvert$. 
  
\begin{theorem}[Mahler's compactness criterion]\cite[p.~16]{RefBI}
Let $M \subset \mathcal{R}$ be a subset of the space of lattices. Then $M$ is relatively compact if and only if: 

\begin{description}
\item[(a)] $\triangle$ is bounded in  $M$ and 
\item[(b)] there exists a neighborhood \;$U$ of \;0 in $\mathbb{R}^n$ such that $\Gamma \cap U=\{0\}$ for every  $\Gamma \in M$. 
\end{description}
 
\end{theorem}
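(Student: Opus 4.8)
The plan is to prove the two implications separately, working in the space $\mathcal{R} \cong GL(n,\mathbb{R})/GL(n,\mathbb{Z})$, where a sequence of lattices converges precisely when they admit bases converging in $(\mathbb{R}^n)^n$. For the necessity of (a) and (b), I would use that relative compactness means the closure $\overline{M}$ is compact. Since the covolume map $\triangle$ is continuous on $\mathcal{R}$, it attains a finite maximum on $\overline{M}$, which gives (a). For (b), I would consider the shortest-vector function $\lambda_1(\Gamma) = \min_{v \in \Gamma \setminus \{0\}} \lVert v \rVert$; it is continuous and strictly positive on $\mathcal{R}$ because lattices are discrete, so it attains a positive minimum $\epsilon$ on the compact set $\overline{M}$. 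Then $U = \{ x : \lVert x \rVert < \epsilon \}$ is a neighborhood of $0$ meeting every $\Gamma \in M$ only at $0$.

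The substantive direction is the sufficiency: assuming (a) and (b), I would show that every sequence $(\Gamma_k) \subset M$ has a subsequence converging in $\mathcal{R}$. The idea is to produce, for each $k$, a basis of $\Gamma_k$ whose vectors are uniformly bounded, and then extract a limit. To bound these bases I would invoke reduction theory: by Minkowski's second theorem the product of the successive minima $\lambda_1(\Gamma) \leq \cdots \leq \lambda_n(\Gamma)$ is comparable to the covolume $\triangle(\Gamma)$, with multiplicative constants depending only on $n$. Condition (b) gives a uniform lower bound $\lambda_1(\Gamma_k) \geq \epsilon$, and condition (a) a uniform upper bound $\triangle(\Gamma_k) \leq T$; combining these yields $\lambda_n(\Gamma_k) \leq \lambda_1 \cdots \lambda_n / \lambda_1^{\,n-1} \leq C_n T / \epsilon^{\,n-1}$, so all successive minima lie in a fixed interval $[\epsilon, R]$.

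Next I would pass from successive minima to an honest basis: for each $\Gamma_k$ choose a Minkowski-reduced basis $b_1^{(k)}, \ldots, b_n^{(k)}$, which satisfies $\lVert b_i^{(k)} \rVert \leq c_n \, \lambda_i(\Gamma_k) \leq c_n R$ for a constant $c_n$ depending only on $n$. Thus all basis vectors lie in a fixed ball, and by Bolzano--Weierstrass in $(\mathbb{R}^n)^n$ a subsequence has $b_i^{(k)} \to b_i$ for each $i$. To see the limit is again a lattice, I would note that $\triangle(\Gamma_k) = \lvert \det(b_1^{(k)}, \ldots, b_n^{(k)}) \rvert \to \lvert \det(b_1, \ldots, b_n) \rvert$, while Minkowski's theorem also gives the lower bound $\triangle(\Gamma_k) \geq c_n' \, \lambda_1 \cdots \lambda_n \geq c_n' \, \epsilon^{\,n} > 0$; hence the limiting determinant is nonzero, $b_1, \ldots, b_n$ are linearly independent, and the lattice they generate is the limit of the subsequence $(\Gamma_k)$ in $\mathcal{R}$.

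The main obstacle I expect is the existence of uniformly controlled bases, that is, the step replacing vectors realizing the successive minima by an actual basis of comparable norm. The vectors achieving $\lambda_1, \ldots, \lambda_n$ need not form a basis when $n \geq 5$, so one genuinely needs Minkowski reduction (or an equivalent reduction-theoretic input) together with Minkowski's two theorems on convex bodies; this is where all the quantitative content of the criterion is concentrated. Once the bases are bounded above and the covolumes bounded below, relative compactness follows from the elementary limiting argument above.
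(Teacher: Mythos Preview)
The paper does not actually prove Mahler's compactness criterion: it is stated in Section~2.2 as a preliminary result with a citation to Borel's \emph{Introduction aux groupes arithm\'etiques}, and no argument is given. So there is no ``paper's own proof'' to compare against.

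That said, your proposed argument is a correct and standard route to the criterion. The necessity direction is immediate from continuity of $\triangle$ and of the shortest-vector function on $\mathcal{R}$. For sufficiency, your use of Minkowski's second theorem to bound all successive minima uniformly, followed by passage to a Minkowski-reduced basis (so that $\lVert b_i\rVert\le c_n\lambda_i$), is exactly the right idea; the Bolzano--Weierstrass extraction and the lower bound $\triangle(\Gamma_k)\ge c_n'\epsilon^n$ then finish the job. Your identification of the main obstacle --- that vectors realizing the successive minima need not themselves form a basis for $n\ge 5$, so one must invoke reduction theory --- is also accurate. One small point worth making explicit in a write-up is the continuity of $\lambda_1$ (or at least its lower semicontinuity), which you use implicitly in the necessity direction; this follows easily from the description of convergence via converging bases, but deserves a sentence.
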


\begin{theorem}[Hermite] \cite[p.~208]{RefCA} or \cite[p.~39]{RefMPE}     
Given a matrix $g \in GL(n,\mathbb{R})$, we have

\begin{equation} \notag
\underset{x\in\mathbb{Z}^{n}\smallsetminus\{0\}}{\min}\left\Vert g \cdot x\right\Vert \leq(2/\sqrt{3})^{(n-1)/2}\lvert \det g \rvert ^{1/n}
  \end{equation}

\end{theorem}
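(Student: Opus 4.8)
The plan is to restate the inequality in the language of the lattice $\Lambda = g\,\mathbb{Z}^n \subset \mathbb{R}^n$, whose covolume is $d := \lvert \det g \rvert$, and to prove it by induction on $n$. Writing $\lambda_1 = \min_{x \in \mathbb{Z}^n \setminus \{0\}} \lVert g\cdot x\rVert$ for the length of a shortest nonzero vector of $\Lambda$, the claim is exactly that $\lambda_1^2 \le (4/3)^{(n-1)/2}\, d^{2/n}$, since $(2/\sqrt{3})^{(n-1)/2} = (4/3)^{(n-1)/4}$. Note first that this minimum is attained, because $\Lambda$ is discrete and closed, so only finitely many of its points lie in any given ball. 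The base case $n=1$ is immediate: there the constant is $1$ and $\lambda_1 = \lvert g\rvert = d$.

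For the inductive step I would fix a shortest nonzero vector $v_1 \in \Lambda$, so that $\lVert v_1 \rVert^2 = \lambda_1^2 =: m$. Such a vector is necessarily primitive: otherwise $v_1 = k u$ with $u \in \Lambda$ and $k \ge 2$ would give a strictly shorter vector. Hence $\mathbb{Z} v_1$ is a direct summand of $\Lambda$. Let $\pi$ be the orthogonal projection onto the hyperplane $H = v_1^{\perp} \cong \mathbb{R}^{n-1}$. Then $\Lambda' := \pi(\Lambda)$ is a full lattice in $H$ whose restriction of $\pi$ has kernel exactly $\mathbb{Z} v_1$, and comparing a fundamental domain of $\Lambda$ (a segment of length $\lVert v_1\rVert$ along $v_1$ times a fundamental domain of $\Lambda'$) gives $\mathrm{covol}(\Lambda') = d/\lVert v_1 \rVert = d/\sqrt{m}$.

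The heart of the argument is to feed a short vector of $\Lambda'$ back into $\Lambda$. By the inductive hypothesis in dimension $n-1$ there is $w' \in \Lambda'$ with $\lVert w' \rVert^2 \le (4/3)^{(n-2)/2}\,(d/\sqrt{m})^{2/(n-1)} =: m'$. Lifting $w'$ to some $w \in \Lambda$ and subtracting a suitable integer multiple of $v_1$, I can arrange that the component of $w$ along $v_1$ has length at most $\lVert v_1\rVert/2$, so that $\lVert w\rVert^2 \le m' + m/4$. Since $w$ is a nonzero vector of $\Lambda$, minimality of $v_1$ forces $m \le \lVert w\rVert^2 \le m' + m/4$, hence $m \le \tfrac{4}{3} m'$. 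Substituting the bound on $m'$ and collecting the powers of $m$ yields $m^{\,n/(n-1)} \le (4/3)^{n/2}\, d^{2/(n-1)}$, and raising to the power $(n-1)/n$ gives $m \le (4/3)^{(n-1)/2}\, d^{2/n}$, which is precisely the asserted bound.

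The steps are all elementary once this framework is set up; the points that need the most care are the projection step — verifying that $v_1$ is primitive, that $\Lambda'$ is a genuine rank-$(n-1)$ lattice, and that its covolume is exactly $d/\lVert v_1\rVert$ — and the bookkeeping of exponents at the end, where one must check that the factor $\tfrac{4}{3}\cdot(4/3)^{(n-2)/2}$ collapses to $(4/3)^{n/2}$ so that the induction closes with exactly Hermite's constant.
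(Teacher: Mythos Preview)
Your inductive argument is correct and is precisely the classical proof due to Hermite: pick a shortest vector, project onto its orthogonal complement, apply the inductive bound to the projected lattice, and lift back while controlling the $v_1$-component; the algebra at the end closes cleanly. The one point you left slightly implicit is that the adjusted lift $w$ is nonzero, but this is immediate since $\pi(w) = w' \neq 0$.

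As for comparison with the paper: there is nothing to compare. The paper does not prove this theorem at all; it is stated as a classical result with citations to Cassels and Martinet, and is used later only as a black box (to guarantee a short vector in $F_2'$ in the proof that the orbit $H.F_1$ is closed). So your write-up supplies a complete proof where the paper simply invokes the literature.
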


\ble
Let $\mathcal{R}$ be the space of lattices in $\mathbb{R}^{n}$, and $M \subset \mathcal{R}$ relatively compact. Then there is a positive number $d$ such that every lattice in $M$ has a basis composed of elements with norm less than $d$.  
\ele

\begin{proof}
Let $\varphi:GL(n,\mathbb{R}) \rightarrow \mathcal{R}$ be the map defined by $\varphi(g)= g(\Gamma_0)$, where $\Gamma_0 = \mathbb{Z}^{n}$. Take $\mathfrak{S}$ a Siegel set in $GL(n,\mathbb{R})$ such that the image of $\mathfrak{S}$ under $\varphi$ is $\mathcal{R}$.  The relative compactness of $M$ is equivalent to find a subset $M^{\prime} \subset \mathfrak{S}$ relatively compact such that $\varphi(M^{\prime})=M$. Let $g=kan \in M^{\prime}$, as the subset $M^{\prime}$ is relatively compact then there are two constants $\alpha, \beta > 0$  such that $\alpha \leq (a_g)_{ii} \leq \beta$ for all $g \in M^{\prime}, i=1,...,n$, also there exist $u>0$ such that $\lvert (n_{ij})\rvert \leq u$. Note that every column vector of the $an$-part of $g$ has norm less than $\beta ((n-1)u+1)$.

As $k$ is an orthogonal matrix every row vector has norm 1,  then $\lvert g_{ij} \rvert$ is less than $\beta ((n-1)u+1)$, so every vector of $g$ has norm less than $\beta n((n-1)u+1)$. Then every lattice in $M$ has a basis composed by vectors with norm less than $d$, with $d\geq \beta n((n-1)u+1)$. 
\end{proof}

\ble \label{bs}
Let $M$ be a  relatively compact subset of $\mathcal{R}$ the space of lattices in $\mathbb{R}^{n}$. There are positive constants $d_1$ and $d_2$ such that for any lattice $L \in M$ exists a basis composed by elements of norm less than $d_1$ and grater than $d_2$. 
\ele

\bde \cite[p.~133]{RefMD}
Let $G$ be a connected semisimple group without compact factors. A lattice $\Gamma$ is said reducible  if  $G$ admits infinite connected normal subgroups $H$ and $H^{\prime}$ such that $HH^{\prime}=G$, $H\cap H^{\prime}$ is discrete and $\Gamma/(\Gamma \cap H)\cdot (\Gamma \cap H^{\prime})$ is finite.  A lattice is irreducible if it is not irreducible.
\ede

\begin{example}
 The lattice obtained by identifying the subgroup $SL(2,\mathbb{Z}[\sqrt{2}])$ in \newline $SL(2,\mathbb{R}) \times SL(2,\mathbb{R})$ via the map $g \mapsto (g,\sigma(g))$ where $\sigma$ is the natural extension of the Galois embedding that sends the number $a+b \sqrt{2}$ to $a-b \sqrt{2}$ is irreducible, unlike $SL(2,\mathbb{Z}) \times SL(2,\mathbb{Z})$ that is reducible. \\
 \end{example}
 
\begin{definition}
Let $G,H$ be algebraic groups and $f:G \rightarrow H$ a surjective homomorphism. We say that $f$ is an isogeny if $ker(f)$ is finite.   
\end{definition}

\begin{example}
There exists an isogeny of $ SL(2,\mathbb{C})$ to $ SO(3,\mathbb{C})$. Consider the space $V$ of $2 \times 2$ complex matrices with trace $0$, and the symmetric bilinear form $<x,y>=\mathrm{tr}(xy)$. The group $ SL(2,\mathbb{C})$ acts on $V$ by $g \cdot x = gxg^{-1}$, moreover this action preserves $<,>$. \\ Let $E_1=\left(\begin{array}{cc}
1 & 0\\
0 & -1
\end{array}\right)$, $E_2=\left(\begin{array}{cc}
0 & 1\\
1 & 0
\end{array}\right)$, $E_3=\left(\begin{array}{cc}
0 & 1\\
-1 & 0
\end{array}\right)$ be an orthogonal basis of $V$. \\
As $<E_1,E_1>=2$,$<E_2,E_2>=2$ and $<E_3,E_3>=-2$ the bilinear form is non-degenerate. So, $ SL(2,\mathbb{C})$ maps to a copy of $ SO(3,\mathbb{C})$, and the kernel is $\{ \pm I_2 \}$. 
\end{example}

\bde
Let $G$ be a linear algebraic group defined over $\mathbb{R}$ and $(\tilde{G},f)$ a $\mathbb{Q}$-form of $G$. A subgroup $H$ commensurable to $G(\mathbb{Z})$ is called an arithmetic subgroup of  $G(\mathbb{R})$. 
\ede

\begin{example}
The following are examples of arithmetic subgroups:
\begin{enumerate}
	\item The group $SL(n,\mathbb{Z})$ in $SL(n,\mathbb{R})$;
	\item the group $SO(p,q)\cap SL(n,\mathbb{Z})$ in $SO(p,q)$;
	\item consider $\sigma$ the automorphism of order 2 of $\mathbb{Q}[\sqrt{2}]$. The group \newline $\Gamma=\{(g,g^{\sigma}) : g\in SL(n,\mathbb{Z}[\sqrt{2}]) \}$ is an arithmetic subgroup of $SL(n,\mathbb{R}) \times SL(n,\mathbb{R})$; 
	\item   The group $SL(n,\mathbb{Z}[i])$ in  $SL(n,\mathbb{C})$.
\end{enumerate}
\end{example}

We recall a classical result of Borel, for a proof see \cite{RefZE}. 

\bthe[Borel density theorem]
Let $G$ be a semisimple Lie group without compact factors, $\Gamma$ a lattice in $G$ and a finite-dimensional representation $\rho: G \rightarrow GL(n,\mathbb{R}) $ of $G$. Then $\rho(\Gamma)$ is Zariski dense in $\rho(G)$.
\ethe

This theorem tell us, for example, that if $x_{ij}:SL(n,\mathbb{R}) \rightarrow \mathbb{R}$ is the map that returns the matrix coefficient at position $(i,j)$, then a polynomial in the $x_{ij}$ variables  with real coefficients that vanishes on $SL(n,\mathbb{Z})$ must vanishes on  $SL(n,\mathbb{R})$.\\

We recall an important theorem of Borel and Harish-Chandra, see \cite{Ref2},  that allows finding lattices in every semisimple linear algebraic group. 

\bthe[Borel and Harish-Chandra]
Let $G$ be a semisimple linear algebraic group defined over $\mathbb{Q}$. Then $G_\mathbb{Z}$ is a lattice in $G_\mathbb{R}$. 

\ethe

\subsection{Useful results on discrete subgroups}
\label{subsec3}

\bde
Let $G$ be a locally compact group and $S_n$ a sequence of subsets of $G$. We say that $S_n$ converges to $S$ if for every compact subset $K\subset G$ and a neighborhood $U$ of $e$ in $G$, there is an integer $r=(K,U)$ such that for all $n \geq r$ and $x \in S_n \cap K$, $xU \cap S \neq \emptyset$ and for all $y \in S \cap K$, $yU \cap S_n \neq \emptyset$. 
\ede

\begin{theorem}[Chabauty] \label{cha} \cite[p.~26]{RefRD}
Let $G$ be a Lie group and $\Gamma_n$ a sequence of lattices in $G$ such that for some open set $W$ of $G$ with $e \in W$, $W \cap \Gamma_n =\{e\}$ for all $n$. Then a subsequence $\Gamma_{i_{n}}$  of $\Gamma_n$ converges to $\Gamma$ and $\Gamma$ is a discrete subgroup. Furthermore, if $\mu$ is a right Haar measure on $G$, $\mu(G/ \Gamma) \leq \lim \inf \mu(G / \Gamma_{i_{n}})$
 \end{theorem}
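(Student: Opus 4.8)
The three conclusions will be proved in turn, using the notion of convergence of subsets introduced above together with the uniform condition $W\cap\Gamma_n=\{e\}$. Since $G$ is a Lie group it is locally compact and second countable, so the collection of closed subsets of $G$ equipped with the topology underlying the convergence just defined (the \emph{Chabauty topology}) is compact and metrizable; as each $\Gamma_n$ is a discrete, hence closed, subgroup, some subsequence $\Gamma_{i_n}$ converges to a closed subset $\Gamma\subseteq G$. To see that $\Gamma$ is a subgroup, observe that a diagonal argument using the part of the definition asserting $yU\cap S_n\neq\emptyset$ for $y\in S\cap K$ shows that every $x\in\Gamma$ is a limit $x=\lim_n x_n$ with $x_n\in\Gamma_{i_n}$, while the part asserting $xU\cap S\neq\emptyset$ for $x\in S_n\cap K$ shows that the limit of any convergent sequence with terms in the $\Gamma_{i_n}$ lies in $\Gamma$. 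Hence, given $x,y\in\Gamma$, pick $x_n\to x$ and $y_n\to y$ with $x_n,y_n\in\Gamma_{i_n}$; then $x_ny_n^{-1}\in\Gamma_{i_n}$ and $x_ny_n^{-1}\to xy^{-1}$, so $xy^{-1}\in\Gamma$, and since $e\in\Gamma$ this makes $\Gamma$ a closed subgroup.

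Discreteness of $\Gamma$ is the first place the hypothesis is used. Fix an open set $W'$ with $e\in W'$ and $\overline{W'}\subseteq W$. If $x\in\Gamma\cap W'$, choose $x_n\in\Gamma_{i_n}$ with $x_n\to x$; since $W'$ is open we have $x_n\in W'\subseteq W$ for all large $n$, hence $x_n\in W\cap\Gamma_{i_n}=\{e\}$ and therefore $x=\lim_n x_n=e$. Thus $\Gamma\cap W'=\{e\}$, and a subgroup whose identity element is isolated is discrete.

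For the volume inequality, recall that a discrete subgroup of a locally compact second countable group admits a strict measurable fundamental domain $F$ with $\mu(F)=\mu(G/\Gamma)$, the translates $F\gamma$, $\gamma\in\Gamma$, being pairwise disjoint. Given any real number $r<\mu(G/\Gamma)$, inner regularity yields a compact set $C\subseteq F$ with $\mu(C)>r$; disjointness of the $F\gamma$ forces that of the $C\gamma$, that is, $C^{-1}C\cap\Gamma=\{e\}$. The crucial claim is that $C^{-1}C\cap\Gamma_{i_n}=\{e\}$ for all large $n$. Indeed, otherwise there are elements $\gamma_n\in(C^{-1}C\cap\Gamma_{i_n})\setminus\{e\}$; since $C^{-1}C$ is compact some subsequence of the $\gamma_n$ converges to a point $\gamma\in C^{-1}C$, and by the convergence $\Gamma_{i_n}\to\Gamma$ we get $\gamma\in\Gamma$, so $\gamma\in C^{-1}C\cap\Gamma=\{e\}$; then $\gamma_n\to e$, so $\gamma_n\in W$ for large $n$, contradicting $W\cap\Gamma_{i_n}=\{e\}$. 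Granting the claim, for all large $n$ the translates $C\gamma$, $\gamma\in\Gamma_{i_n}$, are pairwise disjoint, so the quotient map $G\to G/\Gamma_{i_n}$ is injective on $C$ and $\mu(G/\Gamma_{i_n})\geq\mu(C)>r$. Letting $n\to\infty$ gives $\liminf_n\mu(G/\Gamma_{i_n})\geq r$, and letting $r\uparrow\mu(G/\Gamma)$ yields $\mu(G/\Gamma)\leq\liminf_n\mu(G/\Gamma_{i_n})$; in particular $\Gamma$ is a lattice whenever the right-hand side is finite.

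I expect the volume semicontinuity to be the main obstacle, and within it the claim that the fixed compact set $C^{-1}C$ eventually meets $\Gamma_{i_n}$ only at the identity. This is exactly the step that would fail without the uniform hypothesis $W\cap\Gamma_n=\{e\}$: otherwise covolume could escape to $0$ through concentration of mass near the identity, and the inequality would be false. The remaining ingredients---the compactness of the Chabauty topology, the two diagonal extractions from the definition of convergence, and the existence and inner regularity of a strict fundamental domain for a discrete subgroup---are standard.
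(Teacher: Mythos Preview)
The paper does not give its own proof of this theorem: it is stated in the preliminaries with a citation to Raghunathan's book and then used as a black box. So there is no proof in the paper to compare against.

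Your argument is correct and is essentially the standard one (the one in Raghunathan, in fact): compactness and metrizability of the Chabauty space of closed subsets of a locally compact second countable group give a convergent subsequence; the two halves of the definition of convergence yield the sequential characterization needed to show the limit is a closed subgroup; the uniform gap $W\cap\Gamma_n=\{e\}$ forces $\Gamma\cap W'=\{e\}$ and hence discreteness; and the covolume bound follows from the fundamental-domain argument with inner regularity, where the same uniform gap is what prevents nontrivial elements of $\Gamma_{i_n}$ from accumulating on $e$ inside the fixed compact $C^{-1}C$. One small point you state without justification is $e\in\Gamma$: this is immediate from the constant sequence $e\in\Gamma_{i_n}$ and the upper-semicontinuity half of Chabauty convergence, but it is worth a word since it is what guarantees $\Gamma$ is nonempty. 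Your closing paragraph is commentary rather than proof and should be dropped in a final version.
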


\bde
Let $G$ be a lie group and $H$ a subgroup of $G$. We say that that $H$ has the property (P) if every Ad$(H)$-stable subspace of $\mathfrak{g}_{\mathbb{C}}$ is  Ad$(G)$-stable, where Ad is the adjoint representation of $G$ in the complexification $\mathfrak{g}_{\mathbb{C}}$ of the Lie algebra $\mathfrak{g}$.
\ede

Every Zariski dense subgroup of an algebraic group has the property (P). 

\begin{theorem} [Wang] \label{wa} \cite[p.~154]{RefRD}
Let $G$ be a semisimple Lie group and $K$ a compact set of $G$. Then there exists a neighborhood $V$ of $e$ en $G$ such that the following holds:  if $\Gamma$ is a discrete subgroup of $G$ such that $\Gamma \cap V$ generates a subgroup with property (P), then $\Gamma \cap V= \{e\}$.    
\end{theorem}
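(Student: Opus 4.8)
I would prove the statement in its contrapositive form: for a discrete $\Gamma$ and a suitably small $V$, a \emph{nontrivial} $\Gamma \cap V$ generates a subgroup lying so close to the identity that it cannot have property (P). The whole argument rests on a Zassenhaus-type contraction estimate near $e$, followed by a short representation-theoretic observation that uses semisimplicity of $G$. This is the right framing because, as recalled just before the statement, Zariski-dense subgroups have property (P); property (P) is thus a ``largeness'' condition, and the content of the theorem is that a group generated by near-identity elements of a discrete group is too ``small'' (nilpotent) to be large in this sense.

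First I would fix a left-invariant metric on $G$ and work in the exponential chart, writing $x=\exp(X)$, $y=\exp(Y)$ with $X,Y$ small. The Baker--Campbell--Hausdorff expansion yields a commutator contraction estimate of the form $\|\log[x,y]\|\le C\,\|X\|\,\|Y\|$ on a fixed ball around $e$, where $[x,y]=xyx^{-1}y^{-1}$ and $C$ depends only on $G$ and the chart. The compact set $K$ enters here: the relevant constant varies continuously with the base point, so taking its supremum over $K$ and shrinking the ball accordingly produces a \emph{single} neighborhood $V$ of $e$ on which the commutator map is strictly contracting, uniformly over $K$.

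The core step, which I expect to be the main obstacle, is the Zassenhaus--Kazhdan--Margulis conclusion (see \cite{RefRD}): with $V$ chosen so that the contraction factor is strictly less than $1$, for every discrete subgroup $\Gamma$ the subgroup $H:=\langle \Gamma\cap V\rangle$ is contained in a connected nilpotent Lie subgroup $N$ of $G$. Discreteness is what makes the descent terminate: the contraction estimate forces the iterated commutators of elements of $\Gamma\cap V$ to accumulate at $e$, and a discrete group has no nontrivial sequence converging to $e$, so the iterated commutators are eventually trivial. Pushing this quantitative control one step further shows that the logarithms of the elements of $\Gamma\cap V$ span a nilpotent subalgebra $\mathfrak{n}\subset\mathfrak{g}$ with $H\subset N:=\exp(\mathfrak{n})$ connected and nilpotent. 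The delicate part is precisely the passage from ``all iterated commutators are small/trivial'' to genuine containment in a connected nilpotent subgroup, together with the uniformity of the estimate, which is exactly what the compactness of $K$ supplies.

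With $H\subset N$ in hand the conclusion is immediate. Assume $\Gamma$ discrete and $\Gamma\cap V\neq\{e\}$, so that $N\neq\{e\}$. Since $N$ is connected and nilpotent, $\mathrm{Ad}(N)$ is a connected nilpotent subgroup of $GL(\mathfrak{g}_{\mathbb{C}})$, and by Lie's theorem it has a common eigenvector; hence there is a one-dimensional subspace $L\subset\mathfrak{g}_{\mathbb{C}}$ that is $\mathrm{Ad}(N)$-stable, and therefore $\mathrm{Ad}(H)$-stable because $H\subset N$. On the other hand $G$ is semisimple, so the nonzero $\mathrm{Ad}(G)$-stable subspaces of $\mathfrak{g}_{\mathbb{C}}$ are exactly the ideals of $\mathfrak{g}_{\mathbb{C}}$, each a sum of simple factors and hence of dimension at least $3$; in particular no line is $\mathrm{Ad}(G)$-stable. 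Thus $L$ is an $\mathrm{Ad}(H)$-stable subspace that is not $\mathrm{Ad}(G)$-stable, so $H$ fails property (P). Equivalently, if $\langle\Gamma\cap V\rangle$ does have property (P), the only possibility is $\Gamma\cap V=\{e\}$, which is the assertion.
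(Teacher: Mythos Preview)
The paper does not supply a proof of this theorem; it is quoted from Raghunathan's book as a preliminary, so there is no ``paper's own proof'' to compare against.

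Your core mechanism is sound and standard: Zassenhaus--Kazhdan--Margulis contraction places $\langle\Gamma\cap V\rangle$ inside a connected nilpotent subgroup $N$; Lie's theorem applied to $\mathrm{Ad}(N)$ on $\mathfrak g_{\mathbb C}$ produces a one-dimensional invariant line; semisimplicity of $\mathfrak g$ rules out one-dimensional ideals, so that line is $\mathrm{Ad}(H)$-stable but not $\mathrm{Ad}(G)$-stable. For the statement \emph{as literally printed}, in which the hypothesis concerns $\langle\Gamma\cap V\rangle$ and $K$ never reappears, this argument is correct.

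The gap is in your treatment of $K$. The printed statement is defective: $K$ is introduced and then not used. The application later in the paper (in the proof that the limit group $\langle E_1,E_2\rangle$ is discrete) shows that the intended hypothesis is that $\langle\Gamma\cap K\rangle$, not $\langle\Gamma\cap V\rangle$, has property~(P), with $V$ depending on $K$; this is also Raghunathan's formulation. Your paragraph claiming that ``the relevant constant varies continuously with the base point, so taking its supremum over $K$'' is not correct and does not address this: the BCH contraction is a local estimate at $e$, and no base point is moving through $K$. What the compact set $K$ actually buys is the ability to shrink $V$ inside the Zassenhaus neighbourhood $V_0$ so that $kVk^{-1}\subset V_0$ for every $k\in K\cup K^{-1}$. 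Then each $\gamma\in\Gamma\cap K$ conjugates $\Gamma\cap V$ back into the nilpotent group, and one argues that $\langle\Gamma\cap K\rangle$ normalizes a nonzero nilpotent subalgebra $\mathfrak n\subset\mathfrak g$. The subspace $\mathfrak n_{\mathbb C}$ is then $\mathrm{Ad}(\langle\Gamma\cap K\rangle)$-stable but, being a nonzero nilpotent subalgebra of a semisimple algebra, not an ideal and hence not $\mathrm{Ad}(G)$-stable. That normalizer step, which is where $K$ genuinely enters, is what your sketch is missing.
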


\begin{theorem}[Zassenhaus] \label{zama} \cite[p.~147]{RefRD}
let  $G$ be a Lie group. Then there exists a neighborhood $U$ of $e$ in $G$ such that if $\Gamma$ is any discrete group of $G$, $\Gamma \cap U$ is contained in a connected nilpotent Lie subgroup of $G$.   
\end{theorem}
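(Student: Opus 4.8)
The plan is to work near the identity through the exponential chart and to exploit the fact that the group commutator map is a contraction there, so that discreteness of $\Gamma$ forces deep commutators to become trivial. First I would fix a norm $\|\cdot\|$ on the Lie algebra $\mathfrak{g}$ of $G$ and a relatively compact neighborhood $B$ of $0$ in $\mathfrak{g}$ on which $\exp$ is a diffeomorphism onto its image. By the Baker--Campbell--Hausdorff formula the group commutator $(a,b)=aba^{-1}b^{-1}$ satisfies $\log\bigl(\exp X\,\exp Y\,\exp(-X)\,\exp(-Y)\bigr)=[X,Y]+(\text{brackets of weight}\ge 3)$, so there is a constant $K$ with $\|\log(a,b)\|\le K\,\|\log a\|\,\|\log b\|$ for $a,b\in\exp(B)$. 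I would then set $U=\exp(B_\varepsilon)$ with $\varepsilon$ small enough that $K\varepsilon\le \tfrac12$ and $\|\mathrm{ad}\,X\|\le\tfrac12$ whenever $\|X\|\le\varepsilon$. With this choice the commutator of two elements of $U$ again lies in $U$, with logarithm strictly shorter than the inputs, and the same contraction holds for the Lie bracket on $B_\varepsilon$. This $U$, independent of $\Gamma$, is the candidate Zassenhaus neighborhood.

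Next I would fix a discrete $\Gamma$. Since $\overline{U}$ is compact, $S:=\Gamma\cap U$ is finite, and there is a radius $\delta>0$ with $\Gamma\cap\exp(B_\delta)=\{e\}$. Tracking the basic commutators in the generators $S$, the contraction estimate gives by induction that every basic commutator of weight $w$ lies in $U$ with logarithm of norm at most $\varepsilon\,2^{-(w-1)}$. Choosing $N$ with $\varepsilon\,2^{-(N-1)}<\delta$, all basic commutators of weight $\ge N$ lie in $\Gamma\cap\exp(B_\delta)$ and hence equal $e$. Since the lower central term $\gamma_N(\Delta)$ of $\Delta:=\langle S\rangle$ is generated by basic commutators of weight $\ge N$ (the collection process), the group $\Delta=\langle\Gamma\cap U\rangle$ is nilpotent of class at most $N-1$.

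To place $\Gamma\cap U$ inside a connected nilpotent Lie subgroup I would pass to the Lie subalgebra $\mathfrak{n}\subseteq\mathfrak{g}$ generated by $\{\log s:s\in S\}$ and take $N$ to be the connected (immersed) subgroup with Lie algebra $\mathfrak{n}$; then $s=\exp(\log s)\in N$ gives $\Gamma\cap U\subseteq N$, so everything reduces to proving $\mathfrak{n}$ nilpotent. Writing $\mathfrak{n}^{(1)}=\mathfrak{n}$ and $\mathfrak{n}^{(j+1)}=[\mathfrak{n},\mathfrak{n}^{(j)}]$, the BCH dictionary shows that for a weight-$w$ group commutator $c_w$ and the corresponding iterated Lie bracket $b_w$ of the $\log s$ one has $\log c_w\equiv b_w\pmod{\mathfrak{n}^{(w+1)}}$. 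As $c_w=e$ for $w\ge N$, this forces $\mathfrak{n}^{(w)}=\mathfrak{n}^{(w+1)}$ for all $w\ge N$, so the lower central series stabilizes at an ideal $\mathfrak{m}$ with $[\mathfrak{n},\mathfrak{m}]=\mathfrak{m}$.

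The hard part is exactly this last step, namely ruling out $\mathfrak{m}\neq 0$: contraction only makes high brackets small, not zero, and smallness of a spanning set cannot bound a fixed subspace, so the naive induction fails here. I would instead argue that $\mathfrak{m}=\sum_{s\in S}\mathrm{ad}(\log s)(\mathfrak{m})$ makes the adjoint action of $\mathfrak{n}$ on $\mathfrak{m}$ non-nilpotent, so some $\mathrm{ad}(X)$ with $X\in\mathfrak{n}$ has a nonzero eigenvalue on a subquotient of $\mathfrak{m}$ and $\mathrm{Ad}(\exp X)$ an eigenvalue off the unit circle; tracing this back through $\Delta$ yields iterated group commutators that neither terminate nor stay bounded, contradicting either the nilpotency of $\Delta$ established above or the discreteness of $\Gamma$. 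A probably cleaner alternative is to use that the closure $\overline{\Delta}$ of a nilpotent group is a nilpotent Lie group, to build $N$ from the identity component $\overline{\Delta}^{\,0}$ together with the finitely many generators in $S$, and to verify that adjoining these small generators to the nilpotent $\mathrm{Lie}(\overline{\Delta}^{\,0})$ preserves nilpotency. Either way, once $\mathfrak{n}$ is nilpotent, $N$ is the desired connected nilpotent Lie subgroup containing $\Gamma\cap U$.
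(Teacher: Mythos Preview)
The paper does not prove this theorem at all: it is quoted as a preliminary result with a citation to Raghunathan's book, and no argument is given. What the paper \emph{does} prove, immediately afterwards, is the contraction estimate for commutators in $GL(n,\mathbb{C})$ (Lemma~\ref{za}), which is exactly the mechanism you invoke in your first paragraph. So there is nothing to compare against beyond that single ingredient; your proposal goes well beyond what the paper attempts.

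On the substance of your sketch: the reduction to a Zassenhaus neighborhood via BCH and the deduction that $\Delta=\langle\Gamma\cap U\rangle$ is nilpotent are standard and correct. You are right that the genuinely delicate point is upgrading nilpotency of the \emph{group} $\Delta$ to nilpotency of the Lie subalgebra $\mathfrak{n}$ generated by the logarithms, and you correctly identify why the naive bracket-shrinking does not finish it. Your first proposed fix (an eigenvalue argument from $[\mathfrak{n},\mathfrak{m}]=\mathfrak{m}$) needs sharpening: $\mathfrak{m}=\sum_s\mathrm{ad}(\log s)(\mathfrak{m})$ does not by itself produce an $\mathrm{ad}(X)$ with a nonzero eigenvalue, since the individual $\mathrm{ad}(\log s)|_{\mathfrak{m}}$ could all be nilpotent; one must combine this with Engel's theorem (if all $\mathrm{ad}(\log s)|_{\mathfrak{m}}$ were nilpotent and generated a nilpotent algebra of operators they would be simultaneously strictly triangular, contradicting surjectivity onto $\mathfrak{m}$) and then feed the resulting non-unipotence of some $\mathrm{Ad}(\exp X)$ back into the group to contradict the nilpotency of $\Delta$ already established. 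Your second route via the closure $\overline{\Delta}$ is the one Raghunathan actually takes and is cleaner, but ``adjoining the finitely many generators in $S$ to $\overline{\Delta}^{\,0}$'' is not quite the argument: rather, one checks that each $s\in S$, being close to $e$, lies on the one-parameter subgroup $\exp(t\log s)$, that these one-parameter subgroups are contained in $\overline{\Delta}$ (because $\overline{\Delta}$ is a Lie group and $s$ is in its identity component by proximity to $e$), and hence $S\subset\overline{\Delta}^{\,0}$, which is the desired connected nilpotent Lie subgroup. With that adjustment your outline is complete.
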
 

In fact, we can construct a neighborhood of the identity such that the commutator contracts that neighborhood.  We prove this fact in the particular case of $GL(n,\mathbb{C})$. For this purpose consider the norm $\left\| \cdot \right\|$ in $GL(n,\mathbb{C})$ given by the inner product $<z,w>=\sum_1^{n}z_i \overline{w_i}$ in $\mathbb{C}^{n^2}$.

\ble \label{za}
Let $G$ be $GL(n,\mathbb{C})$. There exists an $\epsilon >0$ such that if \:$V_\epsilon$ is the $\epsilon$-neighborhood of \:$I_n$ in $G$ then for any $A,B \in V_\epsilon$,  $\left\|[A,B]-I_n\right\| < \frac{1}{2} \mathrm{min}\{\left\|A-I_n\right\|,\left\|B-I_n\right\|\}$.  
\ele 

\begin{proof}
We define the function $\sigma$ given by $\sigma(C):=\max\{\left\|C-I_n\right\|,\left\|C^{-1}-I_n\right\|\}$, take $V$ an $\epsilon$-neigborhood such that $\sigma(C)<c=1/16$ for all $C \in V$. Suppose that $\sigma(A) \leq \sigma(B)$. By the submultiplicativity of the norm we have, 

\begin{equation}\notag
\left\|ABA^{-1}B^{-1}-I_n\right\| \leq \left\|A^{-1}\right\|\left\|B^{-1}\right\|\left\|AB-BA\right\|.
\end{equation}

Note that $\left\|C^{-1}\right\|<1+c$ when $\sigma(C)<c$, so 
\begin{equation}\notag
\left\|ABA^{-1}B^{-1}-I_n\right\|< (1+c)^2  \left\|AB-BA\right\|.
\end{equation} 
In the other hand, 
\begin{equation}\notag
\left\|AB-BA\right\|=\left\|(A-B)(A-I_n)-(A-I_n)(A-B)\right\| \leq
\end{equation}
\begin{equation}\notag
\leq \left\| (A-B)(A-I_n)\right\|+ \left\| (A-I_n)(A-B)\right\| \leq 2\sigma(A)\left\|A-B\right\| \leq
\end{equation}
\begin{equation}\notag
\leq 2\sigma(A)(\sigma(A)+\sigma(B)) \leq 4 \sigma(A) \sigma(B).
\end{equation}
By the two inequalities we have, 
\begin{equation}\notag
\left\|ABA^{-1}B^{-1}-I_n\right\| < 4c(1+c)^2\sigma(A), 
\end{equation}
by the election of $c$ we have that $4c(1+c)^2<\frac{1}{2}$, and then $\left\|ABA^{-1}B^{-1}-I_n\right\|< \frac{1}{2}\sigma(A)$. Thus, $\left\|ABA^{-1}B^{-1}-I_n\right\|< \frac{1}{2} \min \{\sigma(A),\sigma(B)\}$, interchanging the roles of $A$ and $B$ we obtain $\left\|(ABA^{-1}B^{-1})^{-1}-I_n\right\|< \frac{1}{2} \min \{\sigma(A),\sigma(B)\}$. Then, $\sigma([A,B]) < \frac{1}{2} \min \{\sigma(A),\sigma(B)\}$    

\end{proof} 

One of the most powerful and beautiful tools used by Oh is the Ratner's theorem, that reduces the possibilities for the orbits of certain actions. For the proof we invited the reader to the article of Ratner \cite{Ref16}.      

\begin{theorem} [Ratner] \label{ra}
Let $G$ be a real Lie group, $\Gamma$ a lattice in $G$, let $X$ be the homogeneous space $G/\Gamma$. Let $U$ be a connected subgroup of $G$ generated by one-parameter unipotent subgroups. Then for any $x\in X$, the closure of the orbit $Ux$ is itself a homogeneous space of finite volume; in other words, there exists a closed subgroup $U \leq H \leq G$ such that $\overline{Ux}=Hx$ and $x \Gamma x^{-1} \cap H$ is a lattice in $H$.  
\end{theorem}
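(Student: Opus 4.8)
The plan is to derive this orbit–closure statement — Raghunathan's topological conjecture — from its measure–theoretic counterpart, Ratner's classification of ergodic invariant measures, and then to pass from invariant measures to the closure of a single orbit. Since $U$ is generated by one-parameter unipotent subgroups, the engine of the whole argument is the dynamics of these one-parameter flows, and the $U$-invariance we ultimately want will be assembled from invariance under the generators. I would therefore first isolate the behaviour of a single one-parameter unipotent subgroup $\{u_t\}_{t\in\mathbb{R}}$ acting on $X=G/\Gamma$, and only at the very end reassemble the intermediate groups produced by the generators of $U$.

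The decisive dynamical input is the \emph{polynomial divergence}, or \emph{shearing}, property of unipotent flows. Writing the relative motion of two nearby orbits in exponential coordinates on a small transversal, the separation between $u_t x$ and $u_t y$ is governed by a fixed polynomial in $t$ whose coefficients depend on the initial displacement $x^{-1}y$; consequently nearby points drift apart most rapidly along a direction lying in the normalizer of $\{u_t\}$. I would exploit this as follows: starting from an ergodic $u_t$-invariant probability measure $\mu$, use Poincar\'e recurrence to bring an orbit segment close to itself, let the shearing push the returning pieces in a controlled transverse direction, and pass to a limit of these drifts. The limit produces a new one-parameter subgroup, \emph{not} contained in the stabilizer we began with, under which $\mu$ is again invariant.

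Iterating this production of extra invariance — and keeping it honest with an entropy argument, so that the entropy of $u_t$ with respect to $\mu$ matches the expansion rate of the larger subgroup leaving $\mu$ invariant — I would show that $\mathrm{Stab}(\mu)=\{g\in G:\ g_*\mu=\mu\}$ acts transitively on the support of $\mu$, whence $\mu$ is the homogeneous measure carried by a single closed orbit $Hx$ with $H\cap x\Gamma x^{-1}$ a lattice in $H$. This \textbf{measure classification step is the main obstacle}: making the shearing quantitative, guaranteeing that the transverse drift is genuinely new rather than reabsorbed into the existing invariance, and closing the induction on $\dim H$ are precisely the technically hardest points of Ratner's work.

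Finally I would upgrade from invariant measures to the closure of a fixed orbit. Using Ratner's equidistribution theorem — that the segment $\{u_t x:\ 0\le t\le T\}$ equidistributes, as $T\to\infty$, toward the homogeneous measure on its closure — together with the Dani–Margulis linearization technique to preclude escape of mass into the countably many smaller closed orbits, I would conclude that $\overline{\{u_t x\}}$ is a homogeneous space of finite volume. Gluing the intermediate groups obtained from the one-parameter generators of $U$ then yields a single closed subgroup $U\le H\le G$ with $\overline{Ux}=Hx$ and $x\Gamma x^{-1}\cap H$ a lattice in $H$, as claimed.
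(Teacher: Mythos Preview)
The paper does not give its own proof of this theorem: it states Ratner's theorem as background and refers the reader to Ratner's original article \cite{Ref16} for the argument. So there is no proof in the paper to compare your proposal against.

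That said, your outline is a faithful high-level description of Ratner's own strategy: deduce the orbit-closure statement from the measure classification theorem for unipotent flows (proved via polynomial divergence/shearing plus an entropy argument), then upgrade to the topological statement using equidistribution and the Dani--Margulis linearization to control time spent near smaller homogeneous subvarieties. You correctly flag the measure-classification step as the deep part. As written, however, this is a roadmap rather than a proof: each of the three main steps (measure rigidity, equidistribution of individual orbits, and the ``gluing'' of the intermediate subgroups coming from the various one-parameter generators of $U$ into a single $H$) is a substantial theorem in its own right, and none of them is carried out here. For the purposes of this paper that is entirely appropriate --- the theorem is quoted, not reproved --- but you should present it as a citation with a brief indication of the method, not as a self-contained argument.
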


An elemental matrix is a matrix with 1's in the diagonal, $(a_{i_0j_0})=r$ for a unique pair  $\{i_0,j_0\}$, $i_0 \neq j_0$ and $(a_{ij})=0$ for the remaining $\{i,j\}$. Let $K$ be a field, we denote by $E_n(K)$ the subgroup of $GL(n,k)$ generated by elemental matrices. It's possible to show that $E_n(K)=SL(n,K)$, so $SL(n,K)$ is generated by one-parameter unipotent subgroups. For a proof, we refer the reader to \cite{RefHOC}.\\ 

The following three theorems present the classification of the maximal connected Lie subgroups of $SL(n,\mathbb{C})$ which is the result of Dynkin's work, see \cite{RefGOV}.

\begin{theorem}
Let $H$ be a maximal connected Lie subgroup of $SL(n,\mathbb{C})$. If $H$ is reducible then $H$ is a maximal parabolic subgroup of $SL(n,\mathbb{C})$. Conversely, if $H$ is a maximal parabolic subgroup of $SL(n,\mathbb{C})$, then $H$ is a   maximal connected Lie subgroup.
\end{theorem}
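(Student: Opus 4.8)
The plan is to prove the two implications separately, relying on the classical facts that the parabolic subgroups of $G=SL(n,\mathbb{C})$ are exactly the stabilizers of flags in $\mathbb{C}^{n}$, that every parabolic subgroup is closed and connected, and that the maximal proper parabolic subgroups are precisely the stabilizers of a single proper nonzero subspace of $\mathbb{C}^{n}$.

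For the first implication I would argue as follows. Assume $H$ is a maximal connected Lie subgroup of $G$ which acts reducibly on $\mathbb{C}^{n}$, so that there is a subspace $W$ with $0\subsetneq W\subsetneq\mathbb{C}^{n}$ and $hW\subseteq W$ for every $h\in H$; as $H$ is a group this forces $hW=W$ for all $h$, hence $H\leq P:=\{g\in G:gW=W\}$. The group $P$ is the stabilizer of the flag $0\subset W\subset\mathbb{C}^{n}$, hence parabolic, and it is a maximal parabolic, since a parabolic subgroup strictly containing $P$ would stabilize a proper sub-flag of $0\subset W\subset\mathbb{C}^{n}$, and the only such sub-flag is $0\subset\mathbb{C}^{n}$, whose stabilizer is $G$. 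Because $\dim W\notin\{0,n\}$, the group $P$ is a proper, closed, connected subgroup of $G$, in particular a proper connected Lie subgroup; from $H\leq P\subsetneq G$ and the maximality of $H$ we conclude $H=P$, a maximal parabolic.

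For the converse, let $P$ be a maximal parabolic subgroup. It is closed, connected and proper, hence a proper connected Lie subgroup, and it remains to show it is maximal among these. I would take any connected Lie subgroup $H$ with $P\leq H\leq G$ and argue on Lie algebras, since an abstract connected Lie subgroup need not be closed or algebraic. After conjugating we may assume that $\mathfrak{p}$ contains the standard Borel subalgebra $\mathfrak{b}=\mathfrak{t}\oplus\bigoplus_{\alpha>0}\mathfrak{g}_{\alpha}$, so $\mathfrak{b}\subseteq\mathfrak{h}$. Since $\mathfrak{t}\subseteq\mathfrak{h}$ and $\mathfrak{h}$ is a subalgebra, $\mathfrak{h}$ is $\mathrm{ad}(\mathfrak{t})$-stable, hence a sum of $\mathfrak{t}$ with root spaces, $\mathfrak{h}=\mathfrak{t}\oplus\bigoplus_{\alpha\in S}\mathfrak{g}_{\alpha}$ with $S\supseteq\Phi^{+}$, and being a subalgebra forces $S$ to be a closed subset of the root system; thus $\mathfrak{h}$ is a standard parabolic subalgebra. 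These are indexed by subsets of the simple roots, $\mathfrak{p}$ corresponds to omitting exactly one simple root, and so the only standard parabolic subalgebra properly containing $\mathfrak{p}$ is $\mathfrak{sl}(n,\mathbb{C})$ itself. Hence $\mathfrak{h}=\mathfrak{p}$ or $\mathfrak{h}=\mathfrak{sl}(n,\mathbb{C})$, and since a connected Lie subgroup is determined by its Lie subalgebra, $H=P$ or $H=G$, as desired.

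I expect the delicate point to be this converse implication. One cannot compare $P$ with a competitor $H$ directly at the level of groups, because $H$ is a priori only an immersed connected subgroup, so the comparison must be made between Lie subalgebras and then transported back through the bijection between connected Lie subgroups and Lie subalgebras. The two classical inputs needed there are that a subalgebra of a semisimple Lie algebra containing a Borel subalgebra is a standard parabolic subalgebra, and the description of these by subsets of simple roots, so that a maximal one is covered only by the whole algebra. The reducible case, by contrast, is essentially immediate once one knows that the stabilizers of single proper nonzero subspaces are exactly the maximal parabolic subgroups of $SL(n,\mathbb{C})$.
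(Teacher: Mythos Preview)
Your argument is correct. Note, however, that the paper does not supply its own proof of this theorem: it is quoted, together with the two companion theorems, as part of Dynkin's classification of maximal connected subgroups, with a reference to \cite{RefGOV}. So there is no proof in the paper to compare against.

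Since you have written a self-contained proof where the paper merely cites, a brief remark on your approach is still worthwhile. The forward direction is indeed immediate from the identification of maximal parabolics in $SL(n,\mathbb{C})$ with stabilizers of single proper nonzero subspaces, together with the connectedness of parabolic subgroups. For the converse, your passage to Lie algebras is the right move, and the key structural input---that any subalgebra of a semisimple Lie algebra containing a Borel subalgebra is a standard parabolic subalgebra, hence indexed by a subset of simple roots---is exactly what is needed. One could alternatively phrase the converse at the group level by invoking the fact that any closed subgroup containing a Borel subgroup is parabolic, but your concern that a competing connected Lie subgroup $H$ might a priori be only immersed, not closed, is legitimate, and the Lie-algebra argument handles this cleanly via the bijection between connected Lie subgroups and subalgebras.
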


\begin{theorem}
Let $H$ be a maximal connected Lie subgroup of $SL(n,\mathbb{C})$. If $H$ is irreducible then $H$ is conjugate to a subgroup  $SL(s,\mathbb{C}) \otimes SL(t,\mathbb{C})$, with $n=st$, $s,t \geq 2$. Conversely, if $H$ has the form $SL(s,\mathbb{C}) \otimes SL(t,\mathbb{C})$, with $n=st$, $s,t \geq 2$, then $H$ is a   maximal connected Lie subgroup.
\end{theorem}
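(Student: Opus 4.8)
I would prove the two implications separately. For the converse, put $V=\mathbb{C}^{s}\otimes\mathbb{C}^{t}$ with $n=st$ and let $H$ be the image of $SL(s,\mathbb{C})\times SL(t,\mathbb{C})$ in $SL(V)$ under the tensor product of the two standard representations. First, $H$ is irreducible on $V$, since an outer tensor product of irreducible representations of the two factors of a product group is irreducible, so $H$ does belong to the class under consideration. To prove maximality I would pass to Lie algebras: a connected subgroup $K$ with $H\subsetneq K\subsetneq SL(V)$ would give a subalgebra $\mathfrak{h}\subsetneq\mathfrak{k}\subsetneq\mathfrak{sl}(V)$ that is $\mathrm{Ad}(H)$-stable, i.e. an $H$-submodule of $\mathfrak{sl}(V)$ strictly between $\mathfrak{h}$ and $\mathfrak{sl}(V)$. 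Now, as $H$-modules,
\[
\mathfrak{gl}(V)=V\otimes V^{*}\cong(\mathbb{C}^{s}\otimes\mathbb{C}^{s*})\otimes(\mathbb{C}^{t}\otimes\mathbb{C}^{t*})\cong\mathfrak{gl}_{s}\otimes\mathfrak{gl}_{t},
\]
and refining this via $\mathfrak{gl}_{s}=\mathfrak{sl}_{s}\oplus\mathbb{C}$, $\mathfrak{gl}_{t}=\mathfrak{sl}_{t}\oplus\mathbb{C}$ and discarding the trivial summand (the scalar matrices) gives $\mathfrak{sl}(V)\cong(\mathfrak{sl}_{s}\otimes\mathfrak{sl}_{t})\oplus(\mathfrak{sl}_{s}\otimes\mathbb{C})\oplus(\mathbb{C}\otimes\mathfrak{sl}_{t})$, with $\mathfrak{h}=(\mathfrak{sl}_{s}\otimes\mathbb{C})\oplus(\mathbb{C}\otimes\mathfrak{sl}_{t})$. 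The three summands are irreducible and pairwise non-isomorphic $H$-modules — the first, an outer product of the two adjoint representations, has the largest dimension, and the other two are supported on different simple ideals of $\mathfrak{h}$ — so $\mathfrak{sl}(V)$ is multiplicity-free as an $H$-module and the only $H$-submodules containing $\mathfrak{h}$ are $\mathfrak{h}$ and $\mathfrak{sl}(V)$ itself. Hence no intermediate $\mathfrak{k}$, and so no intermediate connected $K$, exists; combined with $\dim\mathfrak{h}=s^{2}+t^{2}-2<s^{2}t^{2}-1=\dim\mathfrak{sl}(V)$ for $s,t\geq 2$, this shows $H$ is a proper maximal connected subgroup of $SL(V)$.

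For the forward direction, let $H\leq SL(n,\mathbb{C})$ be maximal connected and irreducible on $V=\mathbb{C}^{n}$. Then $H$ is reductive, since the fixed subspace of its unipotent radical is nonzero and $H$-invariant, hence all of $V$, forcing the radical to act trivially and therefore to be trivial; and by Schur's lemma the centre of $\mathfrak{h}$ acts on $V$ by scalars, which are traceless only if zero, so $\mathfrak{h}$ is semisimple. Write $\mathfrak{h}=\mathfrak{s}_{1}\oplus\cdots\oplus\mathfrak{s}_{r}$ as a sum of simple ideals. If $r\geq 2$, the structure of irreducible modules over a direct sum of semisimple Lie algebras gives $V\cong V_{1}\otimes\cdots\otimes V_{r}$ with each $V_{i}$ a faithful irreducible $\mathfrak{s}_{i}$-module, so $\dim V_{i}\geq 2$; grouping the tensor factors into two blocks yields $V\cong W_{1}\otimes W_{2}$ with $\dim W_{i}\geq 2$ and $\mathfrak{h}\subseteq\mathfrak{sl}(W_{1})\oplus\mathfrak{sl}(W_{2})$, that is $H\subseteq SL(W_{1})\otimes SL(W_{2})$, which is a proper subgroup of $SL(V)$ by the dimension count above; maximality of $H$ then forces $H=SL(W_{1})\otimes SL(W_{2})$, i.e. $H\cong SL(s,\mathbb{C})\otimes SL(t,\mathbb{C})$ with $s=\dim W_{1}$, $t=\dim W_{2}$. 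If $r=1$, so that $\mathfrak{h}$ is simple and $V$ is a tensor-indecomposable irreducible $\mathfrak{h}$-module, one is in the primitive situation and I would invoke Dynkin's classification, \cite{RefGOV}: analysing the highest weight of $V$ and the $\mathfrak{h}$-invariant bilinear forms on it singles out the form-preserving subgroups $SO(n,\mathbb{C})$, $Sp(n,\mathbb{C})$ and a finite list of exceptional representations (for $n=3$ only $SO(3,\mathbb{C})\cong PSL(2,\mathbb{C})$, acting through $\mathrm{Sym}^{2}$ of the standard module, occurs).

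I expect the genuine obstacle to be this primitive simple case of the forward implication: deciding which irreducible representations of a simple group have maximal image requires Dynkin's combinatorial apparatus of weights and indices of representations, and it is also what separates the imprimitive subcase treated here — $H$ a tensor product — from the primitive one, which consists of the classical form-preserving groups together with a finite list of exceptional subgroups. The other ingredients (irreducibility of the tensor representation, the $H$-module decomposition of $\mathfrak{sl}(V)$, the correspondence between connected subgroups and subalgebras, and the reduction of the imprimitive case to a tensor decomposition of $V$) are routine.
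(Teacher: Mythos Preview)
The paper does not prove this statement; it is quoted without argument as part of Dynkin's classification of maximal connected subgroups of $SL(n,\mathbb{C})$, with a blanket reference to \cite{RefGOV}. Your proposal therefore goes well beyond anything the paper supplies.

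Your converse is correct and clean: the decomposition of $\mathfrak{sl}(V)$ as an $H$-module into three irreducible, pairwise inequivalent summands shows there is no $H$-stable subspace strictly between $\mathfrak{h}$ and $\mathfrak{sl}(V)$, hence no intermediate connected subgroup.

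The forward direction, however, has a gap you half-recognise but do not close. Your reduction to semisimple $\mathfrak{h}$ and your treatment of the case $r\geq 2$ are sound and do yield $H=SL(s,\mathbb{C})\otimes SL(t,\mathbb{C})$. But in the case $r=1$ the subgroups you yourself list --- $SO(n,\mathbb{C})$, $Sp(n,\mathbb{C})$, and Dynkin's exceptional representations --- are \emph{not} of the form $SL(s,\mathbb{C})\otimes SL(t,\mathbb{C})$ with $s,t\geq 2$. For instance $SO(3,\mathbb{C})\subset SL(3,\mathbb{C})$ is irreducible and maximal connected, yet $3$ admits no factorisation $st$ with $s,t\geq 2$. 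No amount of Dynkin's combinatorics will force these into the stated conclusion: the forward implication, as written, is simply false. The correct trichotomy --- which the paper's three quoted theorems are jointly meant to express --- is that a maximal connected subgroup of $SL(n,\mathbb{C})$ is either a maximal parabolic (the reducible case), or a tensor subgroup $SL(s)\otimes SL(t)$ (the irreducible, non-simple case), or one of the simple irreducible subgroups treated in the paper's next theorem. The present statement needs the additional hypothesis that $\mathfrak{h}$ is not simple; once that is added, your argument becomes a complete proof, the $r=1$ case being excluded by hypothesis.
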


\begin{theorem}\label{Max}
Let $R:H \rightarrow GL(V)$ be a non-trivial irreducible linear representation of a simply connected simple Lie group $H$. If there are no non-degenerate bilinear forms in $V$ invariant under $R$, then  $R(H)$ is a maximal connected subgroup of  $SL(V)$, and if $R$ is orthogonal or symplectic, then  $R$ is a maximal connected subgroup of $SO(V)$ or $SP(V)$, respectively. The only exceptions are the representations listed in Table 7 in \cite{RefGOV}.     
\end{theorem}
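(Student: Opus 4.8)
The plan is to recast the maximality statement as a question about Lie subalgebras and then to run Dynkin's classification of the irreducible subalgebras of the classical Lie algebras. First I would pass from groups to Lie algebras. Since $R$ is nontrivial and $H$ is simple, $\ker dR$ is a proper ideal of $\mathfrak{h}$ and hence is $0$, so $\mathfrak{h}':=dR(\mathfrak{h})$ is a simple subalgebra of $\mathfrak{gl}(V)$ isomorphic to $\mathfrak{h}$. Because connected Lie subgroups correspond bijectively and inclusion-preservingly to Lie subalgebras (here the simple connectedness of $H$ guarantees the representation integrates), $R(H)$ is maximal connected in a classical group $G$ exactly when $\mathfrak{h}'$ is a maximal subalgebra of $\mathfrak{g}=\mathrm{Lie}(G)$. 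Since $R$ is irreducible, Schur's lemma forces the centralizer of $\mathfrak{h}'$ in $\mathfrak{gl}(V)$ to consist of scalars, whence $\mathfrak{h}'\subset\mathfrak{sl}(V)$.

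Next I would pin down the ambient group $G$ using the trichotomy of invariant forms. By Schur's lemma the space of $R$-invariant bilinear forms on $V$ is at most one-dimensional, and when it is nonzero the form is automatically either symmetric or skew by irreducibility. This reproduces exactly the three cases in the statement: no invariant form, so $V\not\cong V^{*}$ and $G=SL(V)$; a symmetric form, so $R$ is orthogonal and $G=SO(V)$; or a skew form, so $R$ is symplectic and $G=Sp(V)$. In every case $R(H)\subset G$, and the task is to prove that $\mathfrak{h}'$ is a maximal subalgebra of $\mathfrak{g}$.

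The core of the argument is the following reduction. Suppose $\mathfrak{h}'\subsetneq\mathfrak{m}\subsetneq\mathfrak{g}$ with $\mathfrak{m}$ proper, and let $M$ be the corresponding connected subgroup; since $\mathfrak{h}'$ already acts irreducibly on $V$, so does $M$. An irreducible subgroup of $GL(V)$ has trivial unipotent radical, because the fixed space of $R_u(M)$ is a nonzero $M$-invariant subspace, hence all of $V$; thus $M$ is reductive. Its connected center acts by scalars, and the scalars lying in $G$ are finite, so the connected center is trivial and $M$ is semisimple. Writing $M$ as an almost-direct product of simple factors, irreducibility of $V$ forces a tensor decomposition $V\cong V_{1}\otimes\cdots\otimes V_{k}$ with each $V_{i}$ an irreducible module for the corresponding factor, compatibly with the invariant form in the orthogonal and symplectic cases. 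Two possibilities remain: either $k\geq 2$, so the irreducible $H$-module $V$ carries a nontrivial outer tensor factorization, or $k=1$, yielding a chain $H\hookrightarrow M\hookrightarrow G$ of irreducible embeddings with $M$ simple.

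Finally I would dispose of these two cases by highest-weight combinatorics, which is precisely Dynkin's computation. In the tensor case one determines when an irreducible representation of a simple $\mathfrak{h}$ splits as an outer product of smaller representations; in the simple-overgroup case one asks which dominant weights of $M$ restrict to the prescribed dominant weight of $H$ while keeping the module irreducible and consistent with the invariant form. Comparing Weyl-dimension formulas and weight multiplicities eliminates all but finitely many numerical coincidences, and these surviving coincidences are exactly the representations recorded in Table 7 of \cite{RefGOV}. The main obstacle is this last step: there is no conceptual shortcut, and the exceptional list can be produced only by the explicit and lengthy root-system bookkeeping carried out by Dynkin, which is why in the body of the paper this theorem is invoked rather than reproved.
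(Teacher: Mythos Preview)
The paper does not supply its own proof of this theorem: it is stated as one of three classification results attributed to Dynkin, with a reference to \cite{RefGOV}, and is then invoked as a black box to conclude that $\phi(SL(2,\mathbb{C}))$ is maximal in $SL(4,\mathbb{R})$. So there is nothing to compare against in the paper itself.

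Your outline is a faithful sketch of Dynkin's argument: reduce to Lie algebras, use irreducibility to force any intermediate subgroup to be semisimple, split into the tensor case and the simple-overgroup case, and then finish by the highest-weight and dimension computations that produce the finite exceptional list. You are also right that the last step is an unavoidable case analysis with no conceptual shortcut, and you correctly anticipated that the paper would cite rather than reprove it. One small remark: you assert that an irreducible representation of a simple algebra admitting a nontrivial outer tensor factorization is already an exceptional phenomenon to be tabulated, but in fact a simple $\mathfrak{h}$ never sits diagonally in a product $\mathfrak{m}_1\times\cdots\times\mathfrak{m}_k$ with each projection nontrivial and each $V_i$ of dimension $\geq 2$ unless $k=1$ (the image of a simple algebra under a homomorphism is simple or zero, so it lands in a single factor, contradicting irreducibility on $V$). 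The genuine content in the $k\geq 2$ branch is rather that the tensor subgroup $SL(V_1)\otimes\cdots\otimes SL(V_k)$ is itself maximal, which is the second of the three quoted theorems; the exceptions in Table~7 all arise from the $k=1$ branch.
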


This result is proved by Oh in \cite{Ref14} based on the works \cite{RefRAN} and \cite{RefVO} of Raghunathan and Venkataramana respectively. 
\begin{theorem}\label{rv}
Let $G$ be a semisimple $\mathbb{Q}-$group of real rank at least two that does not contain normal non trivial $\mathbb{Q}$-subgroups. Let $F_1\subset U$, $F_2\subset U^{-}$ be a pair of lattices contained in a pair of opposite horospherical $\mathbb{Q}$-subgroups. If $F_1$ and $F_2$ are commensurable to $U(\mathbb{Z})$ and $U^{-}(\mathbb{Z})$ respectively, then $<F_1,F_2>$ is commensurable to $G(\mathbb{Q})$.  
\end{theorem}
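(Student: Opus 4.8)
The plan is to prove the theorem in three stages: first, to extract from the hypotheses that $\langle F_{1},F_{2}\rangle$ is Zariski dense in $G$ and contained in $G(\mathbb{Q})$; second, to use the Chevalley commutator relations between the two opposite unipotent subgroups to force $\langle F_{1},F_{2}\rangle$ to contain, up to finite index, all the relative root subgroups over a suitable subring $\mathcal{O}$ of the field of definition; and third, to invoke the Borel--Tits structure theory together with the higher-rank generation and congruence-subgroup results of Raghunathan \cite{RefRAN} and Venkataramana \cite{RefVO} to identify the group so obtained with an arithmetic subgroup of $G(\mathbb{Q})$.

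First I would record the reductions. Since $U$ and $U^{-}$ are unipotent $\mathbb{Q}$-groups, $U(\mathbb{Z})$ and $U^{-}(\mathbb{Z})$ are lattices in $U(\mathbb{R})$ and $U^{-}(\mathbb{R})$, hence Zariski dense in $U$ and in $U^{-}$; and since the exponential map of a unipotent $\mathbb{Q}$-group is an isomorphism of $\mathbb{Q}$-varieties, any subgroup commensurable with $U(\mathbb{Z})$ lies in $U(\mathbb{Q})$ and remains Zariski dense. Thus $F_{1}\subseteq U(\mathbb{Q})$, $F_{2}\subseteq U^{-}(\mathbb{Q})$, and $\langle F_{1},F_{2}\rangle$ is Zariski dense in the subgroup of $G$ generated by $U$ and $U^{-}$. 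As $U\neq\{e\}$ the group $G$ is $\mathbb{Q}$-isotropic, and the subgroup generated by the unipotent radicals of two opposite parabolic $\mathbb{Q}$-subgroups is a normal $\mathbb{Q}$-subgroup of $G$; being nontrivial, it equals $G$ by the hypothesis that $G$ has no nontrivial normal $\mathbb{Q}$-subgroup. Hence $\langle F_{1},F_{2}\rangle$ is Zariski dense in $G$.

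Next comes the core step. I would fix a maximal $\mathbb{Q}$-split torus $S$ in the common Levi $L=P^{+}\cap P^{-}$, let $\Phi$ be the corresponding relative root system (so that $\Phi(U)$ and $-\Phi(U)$ are exactly the roots not occurring in $L$), and write $x_{\gamma}(\cdot)$ for a rational parametrization of the root subgroup $U_{\gamma}$. The hypothesis that $G$ has real rank at least two provides the room needed to begin: in the case relevant here, where $\Phi$ has rank at least two --- of type $A_{2}$ for $SL(3,\mathbb{C})$ --- one may choose $\alpha\in\Phi(U)$ and $-\beta\in-\Phi(U)$ such that $\gamma_{0}=\alpha-\beta$ is a root of $L$ and is the only root among $\{i\alpha-j\beta : i,j\geq 1\}$, and then the Chevalley commutator formula reads $[x_{\alpha}(a),x_{-\beta}(c)]=x_{\gamma_{0}}(c_{\alpha,\beta}\,ac)$ with $c_{\alpha,\beta}\neq 0$. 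As $a$ ranges over the $\alpha$-components of the elements of $F_{1}$ and $c$ over the $(-\beta)$-components of those of $F_{2}$ --- which generate finite-index subgroups of the relevant additive groups, by the first stage --- the products $ac$ generate a finite-index subgroup of $U_{\gamma_{0}}(\mathcal{O})$, so $\langle F_{1},F_{2}\rangle$ contains such a subgroup. For $SL(3,\mathbb{C})$, where $\Phi(U)$ and $-\Phi(U)$ have two roots apiece, this produces the two root subgroups of $L$ at once --- for instance $[\,I+aE_{12},\,I+cE_{31}\,]=I-ac\,E_{32}$ --- so that all six relative root subgroups are obtained simultaneously; in general one iterates, conjugating the root subgroups already produced by one another and by the Weyl elements now available, until a finite-index subgroup of every $U_{\gamma}(\mathcal{O})$ has been reached.

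Finally I would close the argument. By Borel--Tits the subgroup of $G(\mathbb{Q})$ generated by all the $U_{\gamma}(\mathbb{Q})$ is $G(\mathbb{Q})^{+}$, which equals $G(\mathbb{Q})$ after passing to a simply connected covering (the Kneser--Tits property, valid for the groups at hand); and replacing each $U_{\gamma}(\mathcal{O})$ by a finite-index subgroup changes the group they generate only by finite index, by the congruence subgroup property in higher rank (Bass--Milnor--Serre and its successors, available because the real rank is at least two and $G$ is $\mathbb{Q}$-isotropic). Together with the Zariski density of $\langle F_{1},F_{2}\rangle$ and the inclusion $\langle F_{1},F_{2}\rangle\subseteq G(\mathbb{Q})$, this gives that $\langle F_{1},F_{2}\rangle$ is commensurable with $G(\mathbb{Q})$. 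The hard part is the propagation in the core step beyond the $SL(3,\mathbb{C})$ case: when the relative root system is not simply laced, several of the $i\alpha-j\beta$ are roots and the commutator formula no longer isolates a single root subgroup; and when the relative rank over the field of definition is only one --- which does not occur for $SL(3,\mathbb{C})$ --- one must instead exploit the infinitude of units. Handling these uniformly is precisely the content of \cite{RefRAN} and \cite{RefVO}, and the appeal to the congruence subgroup property for the sharp integral statement is the other substantive ingredient.
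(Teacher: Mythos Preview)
The paper does not prove this theorem at all: it is quoted verbatim as a result of Oh in \cite{Ref14}, itself built on Raghunathan \cite{RefRAN} and Venkataramana \cite{RefVO}, and is then used as a black box in the final paragraph of Section~3. So there is no ``paper's own proof'' to compare against; your sketch is already far more than what the paper supplies.

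That said, your outline is broadly faithful to what those cited papers actually do --- Zariski density from $\langle U,U^{-}\rangle = G$, commutator relations to manufacture root subgroups inside $\langle F_{1},F_{2}\rangle$, and then a bounded-generation/finite-index argument to conclude --- so in spirit you are on the right track. Two points deserve comment. First, the conclusion as printed in the paper, ``commensurable to $G(\mathbb{Q})$'', is almost certainly a typo for $G(\mathbb{Z})$: $G(\mathbb{Q})$ is not discrete, and commensurability with it would be meaningless for a discrete group; the way the theorem is used at the end of Section~3 (to conclude arithmeticity) confirms that $G(\mathbb{Z})$ is intended. You reproduce this typo in your final sentence, and your invocation of Kneser--Tits for $G(\mathbb{Q})^{+}=G(\mathbb{Q})$ is correspondingly off target. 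Second, your ``core step'' is more delicate than you let on: a generic element of $F_{1}$ is not $I+aE_{12}$ but $I+aE_{12}+bE_{13}$, so the commutator $[f_{1},f_{2}]$ does not land cleanly in a single root group $U_{\gamma_{0}}$; extracting the individual root-group pieces, and showing that their integral parameters sweep out a finite-index subgroup of $U_{\gamma_{0}}(\mathcal{O})$, is exactly the work done in \cite{RefRAN} and \cite{RefVO}, and your appeal to the congruence subgroup property at the end is a proxy for the bounded-generation results those papers establish rather than a direct consequence of CSP.
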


\section{Arithmeticity of certain combined groups}
\label{sec3}

 In this section, we follow the ideas of Oh to prove the main theorem. The plan consist of study the orbits obtain by the conjugation action of the commutator of the Levi subgroup on the space of lattices of the horospherical subgroups.  We adapted the proofs of Oh to our case and give an amalgamated and perhaps clearer version of the method followed by her. 
 
 We use the norms $\left\| x \right\|$
and $\left\| g \right\|$ for $x \in \mathbb{C}^l$ and $g \in SL(l,\mathbb{C} )$ that correspond to the usual inner products of $\mathbb{C}^l$ and $\mathbb{C}^{l^2}$.
We remember that every pair $\{U$, $U^{-}\}$ of two opposite minimal horospherical subgroups in  $SL(3,\mathbb{C})$ is conjugate to either   

\begin{equation}\notag
\left\{ \left\{ \begin{array}{ccc}
1 & * & *\\
0 & 1 & 0\\
0 & 0 & 1
\end{array}\right\},\left\{ \begin{array}{ccc}
1 & 0 & 0\\
* & 1 & 0\\
* & 0 & 1
\end{array}\right\} \right\} 
 \end{equation} 

or 

\begin{equation}\notag
\left\{ \left\{ \begin{array}{ccc}
1 & 0 & *\\
0 & 1 & *\\
0 & 0 & 1
\end{array}\right\} ,\left\{ \begin{array}{ccc}
1 & 0 & 0\\
0 & 1 & 0\\
* & * & 1
\end{array}\right\} \right\} 
 \end{equation}

We work on the first pair, the second one is treated in the same way. Note that both $U$ and $U^{-}$ are isomorphic to $\mathbb{C}^2$, so we can identify the space of lattices in $U$ ($U^{-}$) with the homogeneous space $GL(4,\mathbb{R})/GL(4,\mathbb{Z})$. \\

The normalizer of $U$, $U^{-}$ is 

\begin{equation}\notag
\left\{ \left(\begin{array}{ccc}
* & * & *\\
0 & * & *\\
0 & * & *
\end{array}\right)\right\} ,\left\{ \left(\begin{array}{ccc}
* & 0 & 0\\
* & * & *\\
* & * & *
\end{array}\right)\right\} 
\end{equation} 

respectively, thus the Levi subgroup is 

\begin{equation}\notag
L=\left\{ \left(\begin{array}{ccc}
* & 0 & 0\\
0 & * & *\\
0 & * & *
\end{array}\right)\right\} 
\end{equation} 

and their commutator 
\begin{equation}\notag
[L,L]=\left\{ \left(\begin{array}{cc}
1 & 0\\
0 & A
\end{array}\right) : A\in SL(2,\mathbb{C})\right\}. 
\end{equation}

The subgroup $[L,L]$ acts by conjugation in both the space of lattices of $U$ and in that of $U^-$, more explicitly, if we take $F_1$ a lattice in $U$ and $F_2$ a lattice in $U^{-}$. Then the conjugation by an element $h=\left(\begin{array}{cc}
1 & 0\\
0 & A
\end{array}\right) \in H$ sends $F_1$ to $F_1A^{-1}$, $F_2$ to $AF_2$ and the pair $(F_1,F_2)$ to $(F_1A^{-1},AF_2)$. In addition, conjugation preserves the volume of the lattices.\\

Fix lattices $F_1$ and $F_2$, by the identification of $U$ and $U^-$ with $\mathbb{C}^2$ we can think the lattices $F_1$ and $F_2$ as lattices in $\mathbb{R}^4$. Denote by $\Omega_i$ the space of lattices in $\mathbb{R}^4$ of the same determinant as $F_i$. The subgroup  $H$ acts transitively on both $\Omega_1$ and $\Omega_2$, so we can identify $\Omega_i$ with $SL(4,\mathbb{R})/SL(4,\mathbb{Z})$.\\      

We can identify the subgroup $H$ with the subgroup of $SL(4,\mathbb{R})$ that centralizes the element $T_i=\left(\begin{array}{cccc}
0 & -1 & 0 & 0\\
1 & 0 & 0 & 0\\
0 & 0 & 0 & -1\\
0 & 0 & 1 & 0
\end{array}\right)$, denote by $\tilde{H}$ this subgroup.  Thus, if we take a representative $g_iSL(4,\mathbb{Z}) \in SL(4,\mathbb{R})$ of the lattice $F_i$, then $^{t}A^{-1}g_1SL(4,\mathbb{Z})$ and $Ag_2SL(4,\mathbb{Z})$ are the representatives of the lattices $hF_1h^{-1}$ and $hF_2h^{-1}$ respectively.  In this way the orbits $H.F_1$, $H.F_2$ and $H.(F_1,F_2)$ are identified with $\tilde{H} g_1SL(4,\mathbb{Z})$, $\tilde{H} g_2SL(4,\mathbb{Z})$ and $\{(^{t}A^{-1}g_1SL(4,\mathbb{Z}),Ag_2SL(4,\mathbb{Z})) : A \in \tilde{H}\}$. 

\begin{lemma}
Let $\{h_i\}$ be a sequence in $H$. If $<F_1,F_2>$ is discrete, then the set $M_1=\{h_iF_1h_i^{-1}\}$ is relatively compact if and only if $M_1=\{h_iF_2h_i^{-1}\}$
\end{lemma}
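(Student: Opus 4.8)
The plan is to argue by contradiction, using commutators of elements of $F_1$ and $F_2$ together with Mahler's criterion. The automorphism $\tau\colon g\mapsto {}^{t}g^{-1}$ of $SL(3,\mathbb{C})$ interchanges $U$ and $U^{-}$, preserves $H$, and carries the triple $(F_1,F_2,\{h_i\})$ to a triple of the same type, preserving discreteness and relative compactness of the corresponding families of lattices; hence it suffices to prove the single implication: \emph{if $M_2$ is not relatively compact, then $M_1$ is not relatively compact}. So assume $\langle F_1,F_2\rangle$ is discrete, $M_2$ is not relatively compact, and, for contradiction, that $M_1$ is relatively compact.

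First I would produce small elements. Conjugation by $H$ preserves covolumes, so all lattices in $M_2$ have the same determinant; by Mahler's criterion the failure of relative compactness of $M_2$ yields, after passing to a subsequence, elements $v_i\in F_2\setminus\{e\}$ with $h_iv_ih_i^{-1}\to e$ in $SL(3,\mathbb{C})$. On the other hand, relative compactness of $M_1$ and Lemma \ref{bs}, applied to the lattices $h_iF_1h_i^{-1}\subset U\cong\mathbb{C}^2$, give for each $i$ a basis $\{w_i^{(1)},w_i^{(2)}\}$ of $F_1$ with the norms $\|h_iw_i^{(k)}h_i^{-1}\|$ bounded above and below, uniformly in $i$ and $k$. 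Writing $w_i^{(1)}=I+P_i$ and $v_i=I+Q_i$ with $P_i^2=Q_i^2=0$, a direct matrix computation gives
$c_i:=[w_i^{(1)},v_i]=I-s_iP_i+s_iQ_i+(s_i+s_i^2)E_{11}-Q_iP_i\in\langle F_1,F_2\rangle$,
where $E_{11}$ is the matrix unit, $s_i=p_iq_i\in\mathbb{C}$, and $p_i,q_i\in\mathbb{C}^2$ are the row and column vectors corresponding to $w_i^{(1)}$ and $v_i$; note $Q_iP_i$ is the rank-one block $q_ip_i$, and $c_i\neq e$ since $w_i^{(1)}\neq e\neq v_i$. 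The scalar $s_i$ equals the pairing of the vectors attached to $h_iw_i^{(1)}h_i^{-1}$ and $h_iv_ih_i^{-1}$, so it is unchanged by this conjugation and, being a bounded quantity times one tending to $0$, satisfies $s_i\to 0$; and since $h_ic_ih_i^{-1}=[h_iw_i^{(1)}h_i^{-1},h_iv_ih_i^{-1}]$ has bounded first factor and second factor tending to $e$, continuity of the commutator gives $h_ic_ih_i^{-1}\to e$.

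The final step is the crux, and the one I expect to be the main obstacle. If $\{h_i\}$ is bounded, then $c_i=h_i^{-1}(h_ic_ih_i^{-1})h_i\to e$, and this together with $c_i\neq e$ and $c_i\in\langle F_1,F_2\rangle$ contradicts the discreteness of $\langle F_1,F_2\rangle$; the proof is then finished, and the reverse implication is obtained by applying the proved implication to the $\tau$-image of the data. When $\{h_i\}$ is unbounded, $c_i$ itself need not tend to $e$ (the conjugating sequence can stretch the block term $Q_iP_i$), and one must work harder: exploit the explicit form above — the term $(s_i+s_i^2)E_{11}$ already tends to $e$, the remaining non-block terms are controlled by $s_i\to0$, and the block $Q_iP_i$ must be estimated directly — and, if necessary, iterate the commutator construction using the contraction estimate of Lemma \ref{za}, so as to keep landing non-trivial elements of $\langle F_1,F_2\rangle$ itself (not merely of its conjugates) in arbitrarily small neighbourhoods of $e$, forcing the same contradiction. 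I expect this control of the unbounded-conjugation case to be the technically delicate part of the argument.
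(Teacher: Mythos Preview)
Your argument is fine up to the point where you produce $h_ic_ih_i^{-1}\to e$, and the bounded-$\{h_i\}$ case is of course immediate. The unbounded case, which you correctly flag as the crux, is a genuine gap, and your suggested line of attack will not close it. Two observations:

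First, your insistence on producing small nontrivial elements of $\langle F_1,F_2\rangle$ \emph{itself}, rather than of a conjugate, is exactly backwards. Discreteness is preserved under conjugation, so it is enough to find small nontrivial elements in a \emph{single fixed} conjugate $g h_j\langle F_1,F_2\rangle h_j^{-1}g^{-1}$. This is what the paper does: one fixes an index $j$ (coming from the failure of compactness of $M_2$) and works entirely inside that conjugate.

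Second, your proposed iteration via Lemma~\ref{za} cannot start, because that lemma requires \emph{both} factors of the commutator to lie in the small neighbourhood $V_\epsilon$, whereas your $h_iw_i^{(k)}h_i^{-1}$ are only bounded, not small. The missing idea is to choose $g$ in the \emph{centraliser} of $H$ (a one-parameter torus $\mathrm{diag}(\lambda^{-2},\lambda,\lambda)$ acting by scaling on $U$): since $g$ commutes with every $h_j$, conjugation by $g$ shrinks the uniformly bounded basis of $h_jF_1h_j^{-1}$ into $V_\epsilon$ while simultaneously rescaling the Mahler lower bound on $M_1$ by the same factor. Now both the (scaled) $F_1$-generators and the small $F_2$-element $y$ lie in $V_\epsilon$, the iterated commutators contract to $e$, and by discreteness of the conjugate they eventually become trivial. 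The last nontrivial stage produces $z$ commuting with the $F_1$-generators, hence (Zariski density) with all of $U$; since the centraliser of $U$ is $Z(G)\cdot U$, one gets $z^3\in gh_jF_1h_j^{-1}g^{-1}$ with $\|z^3-I_3\|$ below the rescaled Mahler bound, the desired contradiction. Your explicit block computation of $[w,v]$ is not needed once this mechanism is in place.
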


\begin{proof}
 Suppose that $M_1$ is relatively compact and $M_2$ is not. As $M_1$ is a relatively compact subset of the space of lattices in $U$, there exists a number $d>0$  such that every lattice in $M_1$ has a basis $\beta_i$, with $\left\|x-I_3\right\|<d$, for all $x \in \beta_i$. Denote by $\beta$ the union $\underset{i}{\bigsqcup}\beta_{i}$. Take $V$ an $\epsilon$-neighborhood of the identity, such that for every $x,y \in V$, we have   $\left\|[x,y]-I_3\right\|<\frac{1}{2}\min\{\left\|x-I_3\right\|,\left\|y-I_3\right\|\}$. Let $g$ in the centralizer of $H$, so that $\left\|gug^{-1}-I_3\right\|<\epsilon$ for all $u\in U$ with $\left\|u-I_3\right\|<d$. By the Mahler's compactness criterion, we can find an $\epsilon_0<\epsilon$ such that $\left\|z-I_3\right\| > \epsilon_0$ for all $z \in g_iM_1g_i^{-1}$, $z$ non trivial, in the other hand like $M_2$ is not relatively compact, there exists a $j \geq 1$, so that we can find a element $y \in gh_jF_2h_j^{-1}g^{-1}$ non trivial with $\left\|y-I_3\right\| \leq \epsilon_0$. \\

We define the sets $W_{-1}=\{gzg^{-1} : z \in U, z \in \beta \cap h_jF_1h_j^{-1}\}$, $W_0=\{y\}$ and $W_i=\{sts^{-1}t^{-1} : s \in W_{-1}, t \in W_{i-1}\}$. Like $W_{-1},W_0 \subset V$, then $\underset{w\in W_{i}}{\sup}\left\Vert w-I_{4}\right\Vert \rightarrow0$ as $i \rightarrow \infty$. In addition, $W_{-1},W_0$ are contained in the discrete subgroup $gh_j<F_1,F_2>h_j^{-1}g^{-1}$, so each $W_i$ is in $gh_j<F_1,F_2>h_j^{-1}g^{-1}$. Then, we can find a $k>0$ such that $U_k \neq \{I_3\}$ and $U_{k+1}=\{I_3\}$. Take $z \in U_k $, $z$ non trivial, with $\left\|z-I_3\right\| \leq \epsilon_0/3$. Note that $z$ belongs to $C(W_{-1})$, like the centralizer is algebraic and the Zariski closure of $\beta$ is $U$, then $z$ commutes with $gh_jUh_j^{-1}g^{-1}$. So, $h_j^{-1}g^{-1}zgh_j \in C(U) \cap <F_1,F_2>$, the centralizer of $U$ is contained in $Z(G) \dot U$, then $(h_j^{-1}g^{-1}zgh_j)^3 \in U \cap <F_1,F_2>$, thus  $h_j^{-1}g^{-1}z^3gh_j \in F_1$.   Then, we have $z^{3} \in gh_jF_1h_j^{-1}g^{-1} $ and $\left\|z-I_3\right\|< \epsilon_0$, that is a contradiction.         
\end{proof}

\begin{lemma}
Let $E_1,E_2$ be a pair of lattices in $\overline{H.(F_1,F_2)}$. If $<F_1,F_2>$ is discrete then the subgroup $<E_1,E_2>$ is discrete. 
\end{lemma}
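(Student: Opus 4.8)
The plan is to present $\langle E_1,E_2\rangle$ as a limit of the discrete conjugates $\Gamma_n:=h_n\langle F_1,F_2\rangle h_n^{-1}$, where $h_n\in H$ is chosen with $h_nF_ih_n^{-1}\to E_i$ in $\Omega_i$ (possible since $(E_1,E_2)\in\overline{H.(F_1,F_2)}$), and then to push discreteness to the limit by reproducing the mechanism of the preceding lemma, with the role of the ``short'' element now played by a hypothetical witness to the non-discreteness of $\langle E_1,E_2\rangle$. First I would note that $H=[L,L]$ normalizes $U$ and $U^-$, so each $h_nF_1h_n^{-1}$ is again a lattice in $U$ and each $\Gamma_n$ is discrete. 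Since $\{h_nF_1h_n^{-1}\}_n\cup\{E_1\}$ is convergent, hence relatively compact in $\Omega_1$, Lemma~\ref{bs} gives $0<d_2<d_1$ together with, for each $n$, a $\mathbb Z$-basis $\{u_n^{(1)},u_n^{(2)}\}$ of $h_nF_1h_n^{-1}$ of norms in $[d_2,d_1]$, and similarly for $h_nF_2h_n^{-1}$; passing to a subsequence, these bases converge to bases $\{\bar u^{(1)},\bar u^{(2)}\}$ of $E_1$ and $\{\bar v^{(1)},\bar v^{(2)}\}$ of $E_2$. Consequently every $\gamma\in\langle E_1,E_2\rangle$, say $\gamma=w(\bar u^{(\cdot)},\bar v^{(\cdot)})$ for a word $w$, is the limit as $n\to\infty$ of $w(u_n^{(\cdot)},v_n^{(\cdot)})\in\Gamma_n$.

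Next I would fix, once and for all, a contracting auxiliary element and a discreteness threshold. Let $\epsilon>0$ be the constant of Lemma~\ref{za}; pick $g=\mathrm{diag}(s^{-2},s,s)\in C(H)$ with $|s|$ large enough that conjugation by $g$, which scales $U$ by $s^{-3}$, carries every element of $U$ of norm $\le d_1$ into $V_\epsilon$. The family $\{\,g\,h_nF_1h_n^{-1}\,g^{-1}\,\}_n$ is the image of a relatively compact family under the fixed homeomorphism ``conjugation by $g$'' of the space of lattices in $U$, hence is itself relatively compact, and Mahler's compactness criterion supplies $\epsilon_0\in(0,\epsilon)$ such that every nontrivial element of every $g\,h_nF_1h_n^{-1}\,g^{-1}$ has norm $>\epsilon_0$. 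Finally set $N=[\,U\cap\langle F_1,F_2\rangle:F_1\,]$, which is finite because $U\cap\langle F_1,F_2\rangle$ is a discrete — by discreteness of $\langle F_1,F_2\rangle$ — subgroup of the abelian group $U$ containing the lattice $F_1$.

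Now suppose, for a contradiction, that $\langle E_1,E_2\rangle$ is not discrete. Then it has nontrivial elements arbitrarily close to $e$; choose one, $\gamma=w(\bar u^{(\cdot)},\bar v^{(\cdot)})$, so small that the element $\zeta_n:=w(u_n^{(\cdot)},v_n^{(\cdot)})\in\Gamma_n$ — which tends to $\gamma$ — satisfies, for all large $n$, $\zeta_n\ne e$, $g\zeta_ng^{-1}\in V_\epsilon$, and $\|(g\zeta_ng^{-1})^{3N^2}-I\|<\epsilon_0$ (all three hold once $\|\gamma-I\|$ is small, since $N$, $g$, $\epsilon_0$ are already fixed). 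Fix such an $n$ and write $\zeta=\zeta_n$, $\Gamma=\Gamma_n$, $h=h_n$. Inside the discrete group $g\Gamma g^{-1}$, starting from the basis $gu^{(1)}g^{-1},gu^{(2)}g^{-1}$ of $g\,hF_1h^{-1}\,g^{-1}$ and from $g\zeta g^{-1}$ — all lying in $V_\epsilon$ — I would form the iterated commutators as in the preceding lemma; by Lemma~\ref{za} the successive generations stay in $V_\epsilon$ and shrink geometrically, so by discreteness the process terminates at a nontrivial $z\in g\Gamma g^{-1}$ with $\|z-I\|\le\|g\zeta g^{-1}-I\|$ that centralizes $g\,hF_1h^{-1}\,g^{-1}$. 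As the latter is Zariski dense in $gUg^{-1}=U$, $z$ centralizes $U$, so $z\in C(U)=Z(G)\cdot U$; being small and nontrivial, $z=\omega u$ with $\omega^3=1$ and $u\in U\setminus\{e\}$. Then $z^{3N}=u^{3N}\in U\cap g\Gamma g^{-1}$; conjugating by $gh$ (which normalizes $U$) into $U\cap\langle F_1,F_2\rangle$, and using that this abelian group has index $N$ over $F_1$, one gets $z^{3N^2}\in g\,hF_1h^{-1}\,g^{-1}$, which is nontrivial and has $\|z^{3N^2}-I\|<\epsilon_0$ — contradicting the defining property of $\epsilon_0$. Hence $\langle E_1,E_2\rangle$ is discrete.

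The step I expect to be the main obstacle is precisely this transfer of discreteness across the limit: because the conjugating elements $h_n$ are in general unbounded in $H$ even though the lattices $h_nF_ih_n^{-1}$ converge, discreteness cannot be passed to the limit by any soft compactness argument, and one is forced to reproduce, in this new situation, the quantitative machinery of the preceding lemma — relative compactness (Mahler's criterion) to forbid short elements in the conjugated lattices, together with the contraction of iterated commutators (Lemma~\ref{za}) inside the ambient discrete group — the only genuinely new ingredient being that the ``seed'' small element is now furnished by the assumed non-discreteness of $\langle E_1,E_2\rangle$ rather than by a breakdown of relative compactness. One must also carry along the same two bookkeeping points as in that proof — centralizing $U$ yields only membership in $C(U)=Z(G)\cdot U$, and $U\cap\langle F_1,F_2\rangle$ may properly contain $F_1$ — both harmless since the indices involved are finite and fixed in advance.
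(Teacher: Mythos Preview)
Your argument is correct, but it takes a genuinely different route from the paper's proof. The paper does not re-run the iterated-commutator machinery of the preceding lemma at all; instead it observes that the set $\{h_iF_1h_i^{-1},h_iF_2h_i^{-1},E_1,E_2\}$ is relatively compact, extracts uniform basis bounds $d_1<\|x-I_3\|<d_2$, notes that each $\Gamma_i:=h_i\langle F_1,F_2\rangle h_i^{-1}$ is generated by its intersection with the compact annulus $K=\{d_1<\|g-I_3\|<d_2\}$ and is Zariski dense (hence has property~(P)), and then invokes Wang's theorem (Theorem~\ref{wa}) to produce a \emph{single} neighborhood $V$ of $e$ with $V\cap\Gamma_i=\{e\}$ for every $i$; Chabauty's theorem (Theorem~\ref{cha}) then gives discreteness of the limit, which contains $\langle E_1,E_2\rangle$.

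What your approach buys is self-containment: you avoid both Wang's theorem and Chabauty's theorem, extracting the needed uniform discreteness by hand from Lemma~\ref{za} and Mahler's criterion, with the assumed non-discreteness of $\langle E_1,E_2\rangle$ supplying the seed small element (in place of the ``short'' lattice vector in the preceding lemma). What the paper's approach buys is brevity---Wang's theorem packages exactly the uniform-neighborhood statement you are reproving in this special case, so the whole argument fits in a few lines. Two minor remarks on your write-up: once $\|z-I\|$ is small enough the central factor $\omega$ is forced to be $1$, so $z\in U$ already and the exponent $3N$ (rather than $3N^2$) suffices to land in $gh_nF_1h_n^{-1}g^{-1}$; and the bound you actually need at the end is $\|z^{3N}-I\|=3N\,\|z-I\|<\epsilon_0$, which follows from $\|z-I\|\le\|g\zeta_ng^{-1}-I\|$ once $\gamma$ is chosen small enough relative to the fixed constants $g,N,\epsilon_0$.
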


\begin{proof}
 Take $\{h_i\}$ a sequence in $H$ such that $(h_iF_1h_i^{-1},h_iF_2h_i^{-1})$ converges to $(E_1,E_2)$. The set $A=\{h_iF_1h_i^{-1},h_iF_2h_i^{-1},E_1,E_2\}$ is relatively compact in the space of lattices, then there are constant $d_1,d_2>0$ such that if $F$ is a lattice in $A$, we can find a basis $\beta$ for $A$ with $d_1 < \left\|x-I_3\right\| <d_2$, for all $x \in \beta$. We define the set $K=\{g\in SL(3,\mathbb{C}) : d_1<\left\|g-I_3\right\|<d_2 \}$, note that for each $h_i$, the set $h_i<F_1,F_2>h_i^{-1} \cap K$ generates $h_i<F_1,F_2>h_i^{-1}$.  The subgroup $h_i<F_1,F_2>h_i^{-1}$ is Zariski dense and therefore has the property (P), in addition $K$ is compact and we can apply the Wang's theorem to find a neighborhood $V$ of the identity such that $V \cap h_i<F_1,F_2>h_i^{-1}=\{I_3\}$ for all $i$. Thus, by the Chabauty's theorem $<E_1,E_2>$ is discrete.  
\end{proof}

By the two previous lemmas, we have: 

\begin{proposition}\label{prop1}
Let $E_1$ a lattice in $\overline{H.F_1}$. If $<F_1,F_2>$ is discrete then there is a lattice $E_2$ in $U^{-}$ such that $(E_1,E_2)$ belongs to $\overline{H.(F_1,F_2)}$ and $<E_1,E_2>$ is discrete. 
\end{proposition}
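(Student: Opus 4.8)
The plan is simply to splice together the two preceding lemmas. Start from a lattice $E_1\in\overline{H.F_1}$ and choose a sequence $\{h_i\}\subset H$ with $h_iF_1h_i^{-1}\to E_1$ in the space of lattices of $U$. A convergent sequence together with its limit is a relatively compact set, so $M_1=\{h_iF_1h_i^{-1}\}$ is relatively compact; since $<F_1,F_2>$ is discrete, the first of the two lemmas above yields that $M_2=\{h_iF_2h_i^{-1}\}$ is relatively compact as well.

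Next I would extract a limit for the second family. Conjugation by $H$ preserves covolume, so every $h_iF_2h_i^{-1}$ is a lattice in $U^{-}$ of the same covolume as $F_2$; the set of such lattices is a closed subset of the space of lattices of $U^{-}\cong\mathbb{R}^4$ (it is a level set of the continuous covolume function $\triangle$). By relative compactness of $M_2$ we may pass to a subsequence along which $h_iF_2h_i^{-1}$ converges, and its limit $E_2$ — being a limit of lattices in $U^{-}$ of covolume $\triangle(F_2)$ — is again a lattice in $U^{-}$. Passing to this subsequence does not disturb $h_iF_1h_i^{-1}\to E_1$, so along it $(h_iF_1h_i^{-1},h_iF_2h_i^{-1})\to(E_1,E_2)$ in the product topology, and therefore $(E_1,E_2)\in\overline{H.(F_1,F_2)}$.

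Finally, the second lemma applies directly: $(E_1,E_2)$ is a pair of lattices in $\overline{H.(F_1,F_2)}$ and $<F_1,F_2>$ is discrete, hence $<E_1,E_2>$ is discrete. This gives the proposition.

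There is essentially no hard step remaining once the two lemmas are in hand; the only points that need a word of care are that $M_2$ genuinely admits a convergent subsequence inside the space of lattices of $U^{-}$ (covolume is constant along the orbit, so one stays in a fixed, closed covolume stratum and Mahler's criterion applies there), and that $\overline{H.(F_1,F_2)}$ is taken for the product topology, so that convergence of both coordinates along one common subsequence places the pair in the closure. The real difficulty of the section is concentrated in the commutator–contraction (Zassenhaus-type) argument underlying the first lemma, which is already established.
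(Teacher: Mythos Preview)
Your proof is correct and is exactly the argument the paper intends: the paper gives no separate proof of this proposition, simply writing ``By the two previous lemmas, we have'' before stating it. You have faithfully spelled out the intended splicing --- use the first lemma to get relative compactness of $M_2$, extract a subsequential limit $E_2$, and apply the second lemma to conclude discreteness --- with appropriate care about why the limit is still a lattice and why the pair lands in $\overline{H.(F_1,F_2)}$.
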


Denote by $\phi$ the map that identify $SL(2,\mathbb{C})$ in $SL(4,\mathbb{R})$ before mentioned, explicitly $\phi$ is given by: 

\begin{equation}\notag
\phi\left(\begin{array}{cc}
a & b\\
c & d
\end{array}\right)=\left(\begin{array}{cccc}
\mathrm{re}(a) & -\mathrm{im}(a) & \mathrm{re}(b) & -\mathrm{im}(b)\\
\mathrm{im}(a) & \mathrm{re}(a) & \mathrm{im}(b) & \mathrm{re}(b)\\
\mathrm{re}(c) & -\mathrm{im}(c) & \mathrm{re}(d) & -\mathrm{im}(d)\\
\mathrm{im}(c) & \mathrm{re}(c) & \mathrm{im}(d) & \mathrm{re}(d)
\end{array}\right).
\end{equation} 

Applying the theorem \ref{Max} we obtain that $\phi(SL(2,\mathbb{C}))$ is a maximal connected subgroup of $SL(4,\mathbb{R})$, since $\phi$ is a non-trivial irreducible linear representation of $SL(2,\mathbb{C})$ that does not leave invariant any non degenerate bilinear form in $\mathbb{C}^4$. This fact reduces the possibilities for the closure of the orbits $H.F_1$ and $H.F_2$.   

\begin{proposition}
The orbits $H.F_1$ and $H.F_2$ are closed. 
\end{proposition}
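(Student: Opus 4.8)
The plan is to apply Ratner's theorem (Theorem~\ref{ra}) to the $H$-orbits $H.F_1 = \tilde H g_1 SL(4,\mathbb{Z})$ and $H.F_2 = \tilde H g_2 SL(4,\mathbb{Z})$ inside $X = SL(4,\mathbb{R})/SL(4,\mathbb{Z})$, and then use discreteness of $\langle F_1,F_2\rangle$ together with Proposition~\ref{prop1} to rule out the non-closed possibilities. First I would record that $H \cong SL(2,\mathbb{C})$, realized as $\phi(SL(2,\mathbb{C})) = \tilde H \leq SL(4,\mathbb{R})$, is generated by one-parameter unipotent subgroups (its image contains the unipotents coming from the elementary matrices of $SL(2,\mathbb{C})$, which via $\phi$ become one-parameter unipotent subgroups of $SL(4,\mathbb{R})$), so Ratner's theorem applies: for $i=1,2$ there is a closed subgroup $\tilde H \leq S_i \leq SL(4,\mathbb{R})$ with $\overline{\tilde H g_i SL(4,\mathbb{Z})} = S_i g_i SL(4,\mathbb{Z})$ and $g_i SL(4,\mathbb{Z}) g_i^{-1} \cap S_i$ a lattice in $S_i$. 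Replacing $S_i$ by the connected component containing $\tilde H$ (or rather arguing with the identity component of the Zariski closure), I would invoke the maximality statement established just before: $\phi(SL(2,\mathbb{C}))$ is a maximal connected Lie subgroup of $SL(4,\mathbb{R})$. Hence the identity component $S_i^0$ is either $\tilde H$ itself or all of $SL(4,\mathbb{R})$.

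In the first case, $\overline{\tilde H g_i SL(4,\mathbb{Z})}$ has the same dimension as the orbit $\tilde H g_i SL(4,\mathbb{Z})$, so (the orbit being a union of finitely many pieces of that dimension, and Ratner giving a genuine homogeneous subspace) the orbit is already closed and we are done for that $i$. So the only way the orbit $H.F_i$ fails to be closed is if $S_i^0 = SL(4,\mathbb{R})$, i.e.\ $\overline{H.F_i} = X$, the whole space of unit-covolume lattices in $\mathbb{R}^4$. The crux is to derive a contradiction from $\overline{H.F_1} = X$ (and symmetrically for $F_2$) using discreteness. Here is where Proposition~\ref{prop1} enters: if $\overline{H.F_1} = \Omega_1$, then every unit-covolume lattice $E_1$ in $\mathbb{R}^4$ lies in $\overline{H.F_1}$, and by Proposition~\ref{prop1} there is a lattice $E_2$ in $U^-$ with $(E_1,E_2) \in \overline{H.(F_1,F_2)}$ and $\langle E_1,E_2\rangle$ discrete. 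The plan is to choose $E_1$ pathologically — in particular so that $E_1$ contains vectors arbitrarily close to $I_3$ in $U$, which is possible since $E_1$ ranges over \emph{all} unit-covolume lattices and Mahler's criterion shows such lattices are not bounded away from the identity. Then $E_1 \leq \langle E_1,E_2\rangle$ contains nontrivial elements arbitrarily close to the identity of $SL(3,\mathbb{C})$, contradicting discreteness of $\langle E_1,E_2\rangle$.

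The main obstacle I expect is making the last step fully rigorous: one has to be careful that the "pathological" $E_1 \in \Omega_1$ one picks genuinely forces short vectors in $\langle E_1,E_2\rangle$ regardless of what $E_2$ the proposition hands back, since $E_2$ is not under our control. The clean way around this is to note that $E_1$ alone already sits inside $\langle E_1,E_2\rangle$, so it suffices that $E_1$ — as a lattice in $U \cong \mathbb{C}^2 \subset SL(3,\mathbb{C})$ — have a nonidentity element within distance $\delta$ of $I_3$ for $\delta$ smaller than any Zassenhaus/Wang neighborhood; and one can certainly arrange a unit-covolume lattice in $\mathbb{R}^4$ with an arbitrarily short nonzero vector while keeping covolume fixed (stretch in the complementary directions). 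A secondary technical point is the passage from Ratner's $S_i$ to its identity component and the application of Dynkin maximality: one should check $S_i^0$ is reductive or at least that maximality of $\phi(SL(2,\mathbb{C}))$ among \emph{connected} subgroups still pins $S_i^0$ down, which is exactly Theorem~\ref{Max} applied to the $4$-dimensional real (i.e.\ viewing $\mathbb{C}^2$ as $\mathbb{R}^4$) irreducible representation $\phi$ of $SL(2,\mathbb{C})$ that admits no invariant nondegenerate bilinear form. Granting these, both $H.F_1$ and $H.F_2$ are closed.
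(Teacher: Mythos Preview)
Your reduction via Ratner and the maximality of $\tilde H = \phi(SL(2,\mathbb{C}))$ in $SL(4,\mathbb{R})$ is exactly what the paper does: either the orbit is closed or its closure is all of $\Omega_i$. The gap is in the endgame. You claim that if $\overline{H.F_1} = \Omega_1$ you can pick $E_1$ with a nonidentity element within distance $\delta$ of $I_3$ ``for $\delta$ smaller than any Zassenhaus/Wang neighborhood,'' and that this alone contradicts discreteness of $\langle E_1,E_2\rangle$. It does not. A discrete subgroup of $SL(3,\mathbb{C})$ can perfectly well contain a single nontrivial element as close to $I_3$ as you like; discreteness only forbids accumulation at the identity, and the isolating neighborhood depends on the group. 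The Zassenhaus neighborhood is no help here: it only asserts that $\Gamma \cap V$ lies in a connected nilpotent subgroup, and your short element already sits in the abelian group $U$, so nothing is violated. Wang's theorem (Theorem~\ref{wa}) requires fixing a compact set $K$ and checking that $\Gamma \cap K$ generates a subgroup with property (P); you neither choose $K$ nor verify that hypothesis for $\langle E_1,E_2\rangle$, and in fact the short element of $E_1$ by itself generates a unipotent (hence not Zariski-dense) subgroup. So looking only at $E_1$ cannot produce a contradiction; you must use $E_2$ in an essential way.

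The paper's argument does exactly that. It works inside $SL(6,\mathbb{R})$ (the real form of the ambient $SL(3,\mathbb{C})$), fixes a Zassenhaus neighborhood $V$ there, and uses Hermite's theorem to guarantee a nontrivial element $y \in F_2'$ of norm bounded by a constant depending only on $\det F_2$. It then chooses $F_1'$ in the dense orbit with a whole set $S$ of very short elements, short enough that both $\tilde S$ and all commutators $[s,h]$ (with $h$ built from $y$) land in $V$. The Zassenhaus property forces the group generated by $\tilde S \cup \{[s,h]\}$ to be nilpotent, and from this one deduces that $h$ itself is unipotent in a way that forces $y=0$, contradicting Hermite. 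The point is that the contradiction comes from playing the short elements of $F_1'$ against the Hermite-bounded element of $F_2'$ via commutators; Proposition~\ref{prop1} is needed precisely to guarantee the existence of such an $F_2'$, but the short vector in $F_1'$ alone is not enough.
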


\begin{proof}
 We give the proof for $H.F_1$, the orbit $H.F_2$ is treated analogously. \\
 Remember that we can identify the orbit $H.F_1$ with the orbit $\tilde{H}g_1SL(4,\mathbb{Z})$. So, we can apply the theorem \ref{ra} in the context of the homogeneous space $SL(4,\mathbb{R})/SL(4,\mathbb{Z})$ for the subgroup $\tilde{H}$. Then if the orbit $\tilde{H}g_1SL(4,\mathbb{Z})$ is not closed, its closure is \newline $SL(4,\mathbb{R})/SL(4,\mathbb{Z})$.  \\
 Take $V$ an $\epsilon-$neighborhood of Zassenhaus of the identity in $SL(6,\mathbb{R})$, that is, $V$ is such that for any discrete subgroup $\Gamma$ of $SL(6,\mathbb{R})$, $\Gamma \cap V$ is contained in a connected nilpotent Lie subgroup of $SL(6,\mathbb{R})$.  Also, take a $c>0$, given by the Hermite's theorem, such that any lattice $\Gamma$ in $\mathbb{R}^4$ has a non-zero element $x$ with $\left\|x\right\|< c \cdot \mathrm{det}(\Gamma)$. \\  
For an element 
$h=\left(\begin{array}{cc}
I_{2} & 0\\
y & I_{4}
\end{array}\right)$  with $\left\|y\right\|< c \cdot \mathrm{det} (F_2)$, we can find an $\epsilon_0>0$ small enough such that $\epsilon_0 <\epsilon$ and $\left\|[g,h]-I_6\right\|<\epsilon$ for all $g\in SL(6,\mathbb{R})$ in the neighborhood $\left\|g-I_6\right\|< \epsilon_0$. As the closure of the orbit $H.F_1$ is identifying with the whole space $SL(4,\mathbb{R})/SL(4,\mathbb{Z})$, it's possible to obtain a lattice $F_1'$ identified with $g_1^{'}SL(4,\mathbb{Z})$ with the same determinant that $g_1SL(4,\mathbb{Z})$ and that contains a set $S$ of elements of norm less than $\frac{\epsilon_0}{\sqrt{2}}$. Take the set: 

\begin{equation}\notag
\tilde{S}=\{\left(\begin{array}{cccccc}
1 & 0 & s_{1} & s_{2} & s_{3} & s_{4}\\
0 & 1 & -s_{2} & s_{1} & -s_{4} & s_{3}\\
0 & 0 & 1 & 0 & 0 & 0\\
0 & 0 & 0 & 1 & 0 & 0\\
0 & 0 & 0 & 0 & 1 & 0\\
0 & 0 & 0 & 0 & 0 & 1
\end{array}\right)
 : (s_1,s_2,s_3,s_4) \in S\}.
\end{equation}       

By the proposition \ref{prop1}, there exists a lattice $F_2'$ in $U^{-}$ identified with $g_2'SL(4,\mathbb{Z})$  of the same determinant of $F_2$, such that $<F_1',F_2'>$ is discrete, then take a element $y$ in $g_2'SL(4,\mathbb{Z})$ with $\left\|y\right\|< c \cdot \mathrm{det} (F_2)$ and $h=\left(\begin{array}{cc}
I_{2} & 0\\
y & I_{4}
\end{array}\right)$. All the elements in the set $A=\{s,[s,h] : s \in \tilde{S} \}$ are in $V$, so the subgroup generated by $A$ is nilpotent and thus the subgroup generated by $\{A,h\}$ is unipotent, this implies that $y$ is trivial, but this contradicts the Hermite's theorem.          

\end{proof}

Denote by $\Lambda_{F_i}=\{h \in H : hF_1h^{-1}=F_1\}$ the stabilizer of $F_i$ under the conjugation action by $H$. By the previous proposition $H.F_i$ is closed, so applying the Ratner's theorem we have that $\Lambda_{F_i}$ is a lattice in $H$. In other words, $\widetilde{\Lambda_{F_i}}=\{\tilde{h} \in \tilde{H} : \tilde{h} g_iSL(4,\mathbb{Z}) \tilde{h}^{-1}=F_1\}$ is a lattice in $\tilde{H}$, where $g_iSL(4,\mathbb{Z})$ is a representative of the lattice $F_i$. We have that $\widetilde{\Lambda_{F_i}} \subset g_iSL(4,\mathbb{Z})g_i^{-1}$ then $g_i^{-1}\widetilde{\Lambda_{F_i}}g_i \subset SL(4,\mathbb{Q})$, by the Borel's density theorem  $g_i^{-1}\widetilde{\Lambda_{F_i}}g_i$ is Zariski dense in $g_i^{-1} \widetilde{\mathbf{H}} g_i$, where $\widetilde{\mathbf{H}}$ is the centralizer of $T_i$ in $SL(4,\mathbb{C})$. As $\widetilde{\mathbf{H}}$ is a $\mathbb{Q}-$subgroup of $SL(4,\mathbb{C})$, we obtain a $\mathbb{Q}-$form for $\tilde{H}$ such that $\tilde{H}(\mathbb{Z})$ is commensurable to $\widetilde{\Lambda_{F_i}}$. This implies that $\Lambda_{F_i}$ is commensurable to an arithmetic subgroup $H(\mathbb{Z})$ for some $\mathbb{Q}-$form of $H$. \\

For a proof of the following theorem see \cite{RefMRT} or \cite{RefMORR}. 
\begin{theorem}
Let $H(\mathbb{Q})$ be a $\mathbb{Q}$-form of the group $H=SL(2,\mathbb{C})$. We have that $H(\mathbb{Q})$ is isomorphic to $SL(1,D)$ where $D$ is a quaternion algebra over a number field $k$ with exactly one complex place, such that $D$ is ramified at all real places.    
\end{theorem}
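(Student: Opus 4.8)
The plan is to read the shape of the $\mathbb{Q}$-form $(\tilde{H},f)$ off descent theory together with the behaviour at the archimedean places. Write $\mathbf{G}=\tilde{H}$; this is a connected semisimple $\mathbb{Q}$-group which is $\mathbb{R}$-isomorphic, via $f^{-1}$, to $H=SL(2,\mathbb{C})$, and in particular simply connected since $SL(2,\mathbb{C})$ is. First I would check that $\mathbf{G}$ is almost $\mathbb{Q}$-simple: a proper connected normal $\mathbb{Q}$-subgroup $\mathbf{N}$ would have $\mathbf{N}(\mathbb{R})$ a positive-dimensional connected normal subgroup of $SL(2,\mathbb{C})$, which is impossible since $SL(2,\mathbb{C})$ has no such subgroup. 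Next, apply the classical Weil-restriction description of $\mathbb{Q}$-simple groups: there is a number field $k$ and an absolutely almost simple simply connected $k$-group $\mathbf{H}$ with $\mathbf{G}\cong R_{k/\mathbb{Q}}\mathbf{H}$. To pin down $k$ and the type of $\mathbf{H}$ I would base change to $\mathbb{C}$: on one hand $\mathbf{G}_{\mathbb{C}}\cong (R_{\mathbb{C}/\mathbb{R}}SL_2)_{\mathbb{C}}\cong SL_2(\mathbb{C})\times SL_2(\mathbb{C})$, from the two embeddings $\mathbb{C}\hookrightarrow\mathbb{C}$; on the other hand $\mathbf{G}_{\mathbb{C}}\cong\prod_{\sigma\colon k\hookrightarrow\mathbb{C}}\mathbf{H}_{\mathbb{C}}^{\sigma}$. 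Comparing the number of simple factors and their type gives $[k:\mathbb{Q}]=2$ and $\mathbf{H}$ of absolute type $A_1$.

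Now I would invoke the classification of $k$-forms of $SL_2$. The Dynkin diagram of type $A_1$ has no nontrivial automorphism, so every such form is inner, and the simply connected absolutely simple $k$-groups of type $A_1$ are exactly the groups $SL(1,D)$ with $D$ a quaternion algebra over $k$ (the correspondence being the classical one between $H^1(k,\mathrm{PGL}_2)$ and the $2$-torsion of the Brauer group $\mathrm{Br}(k)$, the split case $D=M_2(k)$ giving $SL_2$). Hence $\mathbf{H}\cong SL(1,D)$, and so $H(\mathbb{Q})\cong SL(1,D)$ for such a quaternion algebra $D$ over the quadratic field $k$.

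Finally I would extract the archimedean conditions from the isomorphism $\tilde{H}(\mathbb{R})\cong\prod_{v\mid\infty}\mathbf{H}(k_v)=\prod_{v\mid\infty}SL(1,D_v)$. A complex place contributes the factor $SL(1,D_v)\cong SL_2(\mathbb{C})$, since there is no quaternion division algebra over $\mathbb{C}$; a real place contributes $SL_2(\mathbb{R})$ if $D$ splits at $v$ and the compact group $SU(2)$ if $D$ is ramified at $v$. Since this product must recover $SL(2,\mathbb{C})$ up to compact factors, there can be exactly one complex place (two would give two copies of $SL_2(\mathbb{C})$) and no real place at which $D$ splits (such a place would produce a non-compact $SL_2(\mathbb{R})$ factor not present in $SL(2,\mathbb{C})$); equivalently $k$ has exactly one complex place and $D$ is ramified at every real place of $k$, which is the assertion.

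The step I expect to be the real obstacle is the descent part: correctly invoking the Weil-restriction structure theorem for $\mathbb{Q}$-simple groups and the Galois-cohomological identification of $A_1$-forms over $k$ with quaternion algebras. Once those are in place, the determination of $k$ and of the ramification set of $D$ is just bookkeeping with the real points. One should also be careful about whether the $\mathbb{Q}$-form produced by the preceding construction is meant in the strict sense, so that $\tilde{H}(\mathbb{R})=SL(2,\mathbb{C})$ exactly and then $k$ is forced to be imaginary quadratic (the real-place condition being vacuous), or in the usual arithmetic-group sense allowing anisotropic compact real factors, which is what gives the full statement as quoted.
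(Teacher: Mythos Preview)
The paper does not give its own proof of this theorem: immediately before the statement it says ``For a proof of the following theorem see \cite{RefMRT} or \cite{RefMORR}'' and then moves on. So there is no argument in the paper to compare against; the theorem is quoted from the literature as a black box.

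Your sketch is essentially the standard argument one finds in those references (restriction-of-scalars decomposition of an almost $\mathbb{Q}$-simple simply connected group, identification of the absolutely simple $k$-factor as an inner $A_1$-form, hence $SL(1,D)$ for a quaternion algebra $D/k$, then reading off the archimedean constraints from $\tilde H(\mathbb{R})\cong\prod_{v\mid\infty} SL(1,D_v)$). The logic is sound and the steps are in the right order; nothing is missing conceptually.

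Your closing remark about the two possible meanings of ``$\mathbb{Q}$-form'' is well taken and in fact pinpoints a slight mismatch in the paper itself. Under the paper's own Definition of a $k$-form (an $\mathbb{R}$-isomorphism $f\colon G\to\tilde G$), one has $\tilde H(\mathbb{R})\cong SL(2,\mathbb{C})$ on the nose, your base-change count forces $[k:\mathbb{Q}]=2$, $k$ is imaginary quadratic, and the ramification clause at real places is vacuous. The more general wording of the theorem (allowing real places of $k$ at which $D$ is ramified) is the classification of arithmetic lattices in $SL(2,\mathbb{C})$ up to compact factors, which is how Maclachlan--Reid and Morris phrase it; that is presumably why the paper quotes it in that form. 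Either reading suffices for the use the paper makes of the result in the next paragraph.
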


Let $D$ be a quaternion algebra as in the previous theorem. Consider $\mathcal{O}$ be an $R_k$-order of $D$, then $\Lambda_{F_i}$ is isomorphic to a subgroup commensurable with $\rho(SL(1,\mathcal{O}))$, where $R_k$ is the ring of integers of $k$ and $\rho$ is a $k-$embedding of $D$ into $M(2,\mathbb{C})$.  Then there exists an isomorphism $\psi$ such that $\psi(F_i)$ is commensurable to $\mathcal{O}$.  \\

Now, we proceed to study the double orbit $H.(F_1,F_2)$.

\begin{theorem}
The orbit $H.(F_1,F_2)$ is closed. 
\end{theorem}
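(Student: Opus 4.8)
The plan is to mimic the strategy used for the single orbits $H.F_1$ and $H.F_2$, now applied to the diagonal-type action on the product of two copies of $SL(4,\mathbb{R})/SL(4,\mathbb{Z})$. Recall that $H.(F_1,F_2)$ is identified with $\{({}^tA^{-1}g_1SL(4,\mathbb{Z}),\,Ag_2SL(4,\mathbb{Z})) : A\in\tilde H\}$, i.e. with the orbit of the diagonally embedded $\tilde H$ (embedded via $A\mapsto({}^tA^{-1},A)$) acting on $X\times X$ with $X=SL(4,\mathbb{R})/SL(4,\mathbb{Z})$. Since $\tilde H\cong SL(2,\mathbb{C})$ is generated by one-parameter unipotent subgroups, Ratner's theorem (Theorem \ref{ra}) applies: there is a closed subgroup $\tilde H\le N\le SL(4,\mathbb{R})\times SL(4,\mathbb{R})$ with $\overline{H.(F_1,F_2)}=N.(g_1SL(4,\mathbb{Z}),g_2SL(4,\mathbb{Z}))$ and the corresponding arithmetic-type intersection a lattice in $N$. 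First I would project $N$ to each factor: since we already know $H.F_1$ and $H.F_2$ are closed, each projection of $\overline{H.(F_1,F_2)}$ is contained in the corresponding closed single orbit, so $N$ projects into (a conjugate of) $\tilde H$ in each factor; hence $N$ is a closed subgroup of $\tilde H\times\tilde H$ containing the diagonal copy of $\tilde H$.

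Next I would use the maximality built into $SL(2,\mathbb{C})$ and the rigidity of its embedding. A closed connected subgroup of $\tilde H\times\tilde H\cong SL(2,\mathbb{C})\times SL(2,\mathbb{C})$ containing a diagonally embedded $SL(2,\mathbb{C})$ is, by Goursat's lemma together with simplicity of $SL(2,\mathbb{C})$, either that diagonal itself (possibly twisted by an automorphism) or the whole product $\tilde H\times\tilde H$. So the only way $H.(F_1,F_2)$ can fail to be closed is $N=\tilde H\times\tilde H$, i.e. the orbits of $\tilde H$ on the two factors become ``independent'' in the closure — equivalently, $\overline{H.(F_1,F_2)}$ surjects onto $\overline{H.F_1}\times\overline{H.F_2}=H.F_1\times H.F_2$. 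I would then derive a contradiction from discreteness of $\langle F_1,F_2\rangle$, exactly in the spirit of the proof that $H.F_1$ is closed: if the double orbit were dense in the product, I could independently choose $A_n\in\tilde H$ making $A_n^{-1}$-conjugates of $F_1$ develop very short vectors while simultaneously $A_n$-conjugates of $F_2$ also develop short vectors (using Proposition \ref{prop1} only when the two sides are coupled, but here the product closure lets me prescribe both sides at once). Embedding the short vectors of $h_nF_1h_n^{-1}$ and $h_nF_2h_n^{-1}$ as unipotent elements of $SL(3,\mathbb{C})$ lying in the discrete group $h_n\langle F_1,F_2\rangle h_n^{-1}$, and using the commutator-contraction Lemma \ref{za} together with the Zassenhaus neighborhood as in the previous propositions, I would produce a nontrivial nilpotent (hence unipotent) subgroup forcing one of the short vectors to be trivial — contradicting Mahler's criterion (the short vectors are genuinely nonzero). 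This shows $N\ne\tilde H\times\tilde H$, hence $N$ is the (possibly twisted) diagonal, i.e. $H.(F_1,F_2)$ is already closed.

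The main obstacle I anticipate is the Goursat/automorphism step combined with making the ``independent short vectors'' argument genuinely work. One has to rule out a twisted diagonal $\{(\theta(A),A):A\in\tilde H\}$ with $\theta$ a nontrivial automorphism of $SL(2,\mathbb{C})$ distinct from the transpose-inverse already present: this requires checking that such a twist is incompatible with the way $\tilde H$ sits in $SL(4,\mathbb{R})$ via $\phi$ and with the genuine $\tilde H$-action on lattices (so that the diagonal in the conclusion is exactly $H$ itself, not a spurious twist). Equally delicate is verifying that when $\overline{H.(F_1,F_2)}$ equals the full product, one can pick a single sequence $h_n$ whose conjugates shrink lattices on both sides \emph{simultaneously} — this is where the product structure of the closure is essential, and where one must be careful that the normalizing/centralizing element $g$ used to transport the short vectors into the Zassenhaus neighborhood (as in the previous proofs) can be chosen uniformly for both $F_1$- and $F_2$-sides. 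Once these are in hand, the contradiction with Hermite's/Mahler's bound runs exactly as before, and closedness of $H.(F_1,F_2)$ follows.
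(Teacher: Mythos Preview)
Your reduction via Ratner and Goursat to the dichotomy ``either $H.(F_1,F_2)$ is closed or its closure is $H.F_1\times H.F_2$'' is correct and matches the paper. However, the way you propose to rule out the full-product case is quite different from what the paper actually does, and it carries a real risk.

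The paper's argument at this step is much shorter and does not re-run any Zassenhaus/short-vector machinery. It simply exploits the \emph{coupling} of the diagonal action: the same element $A\in SL(2,\mathbb{C})$ acts on both coordinates (as $A\mapsto({}^tA^{-1},A)$). Pick a point $(\tilde A\cdot F_1,\,F_2)$ in $H.F_1\times H.F_2$ with $\tilde A\cdot F_1\neq F_1$; if a sequence $({}^t\phi(A_i)^{-1}\cdot F_1,\,\phi(A_i)\cdot F_2)$ in the orbit converges to it, then $\phi(A_i)\cdot F_2\to F_2$, and since the single orbit $H.F_2$ is closed with discrete (lattice) stabilizer, this already constrains the $\phi(A_i)$ enough to force the first coordinate to $F_1$ as well---contradiction. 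No discreteness of $\langle F_1,F_2\rangle$ is invoked here at all, and no short vectors are needed. This is the key simplification you are missing.

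Your proposed route has a specific danger. In the single-orbit proof, the alternative to closedness was that $\overline{H.F_1}$ is \emph{all} of $SL(4,\mathbb{R})/SL(4,\mathbb{Z})$, which certainly contains lattices with arbitrarily short vectors; that is what powers the Zassenhaus contradiction. In the double-orbit case the alternative is only that the closure equals $H.F_1\times H.F_2$, a product of two \emph{already closed, finite-volume} orbits. If the stabilizers $\Lambda_{F_i}$ happen to be cocompact in $H\cong SL(2,\mathbb{C})$---which the paper's analysis does not exclude a priori---then Mahler's criterion gives a uniform positive lower bound on the shortest vector of every lattice in $H.F_i$, and your plan to ``independently choose $A_n$ making both sides develop very short vectors'' cannot get off the ground. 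So the analogy with the single-orbit proof breaks precisely here.

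One more remark: your worry about ``twisted diagonals'' is unnecessary. Any closed connected subgroup of $\tilde H\times\tilde H$ that \emph{contains} the given diagonal copy of $\tilde H$ and has the same dimension must equal it (they are both $6$-dimensional, simple, connected); a nontrivially twisted diagonal would be a different subgroup of the same dimension, not one containing the original. So the Goursat step really yields exactly the two cases $N=\tilde H$ or $N=\tilde H\times\tilde H$, with no twist to rule out.
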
  

\begin{proof}
We identify the double orbit $H.(F_1,F_2)$ with the set \newline $\tilde{H}.(F_1,F_2)=\{(^{t}\phi(A)^{-1}g_1SL(4,\mathbb{Z}),\phi(A)g_2SL(4,\mathbb{Z})) : A \in SL(2,\mathbb{C})\}$, where $g_iSL(4,\mathbb{Z})$ is a representative of the lattice $F_i$ in $SL(4,\mathbb{R})/SL(4,\mathbb{Z})$. If $\tilde{H}.(F_1,F_2)$ is not closed, then   its closure  is $\tilde{H}.F_1 \times \tilde{H}.F_2$ as a consequence of the Ratner's theorem, where $\tilde{H}.F_1=\{\tilde{A}g_iSL(4,\mathbb{Z}) : \tilde{A} \in \tilde{H}\}$. \\
 Take an element $(\tilde{A}g_1SL(4,\mathbb{Z}),g_2SL(4,\mathbb{Z}))$ in $\tilde{H}.F_1 \times \tilde{H}.F_2$ with $\tilde{A}$ not in $SL(4,\mathbb{Z})$, if a sequence in $\{(^{t}\phi(A_i)^{-1}g_1SL(4,\mathbb{Z}),\phi(A_i)g_2SL(4,\mathbb{Z}))\}$ in $\tilde{H}.(F_1,F_2)$ converges to $(\tilde{A}g_1SL(4,\mathbb{Z}),g_2SL(4,\mathbb{Z}))$, then $\phi(A_i)g_2SL(4,\mathbb{Z})$ converges to $g_2SL(4,\mathbb{Z})$ and thus $^{t}\phi(A_i)^{-1}g_1SL(4,\mathbb{Z})$ converges to $g_1SL(4,\mathbb{Z})$, which is impossible. Thus $H.(F_1,F_2)$ is closed.  
\end{proof}

As $H.(F_1,F_2)$ is closed, then by the Ratner's theorem we have that $\Lambda_{F_1,F_2}$ is a lattice in $H$. Where $\Lambda_{F_1,F_2}=\{h \in H : (hF_1h^{-1},hF_2h^{-1})=(F_1,F_2)\}$ is the stabilizer of the pair of lattices $(F_1,F_2)$.  In other words, $\widetilde{\Lambda}_{F_1,F_2}=\{A \in \tilde{H} : (^{t}A^{-1} g_1SL(4,\mathbb{Z}),Ag_2SL(4,\mathbb{Z})) =(g_1SL(4,\mathbb{Z}),g_2SL(4,\mathbb{Z}))\}$ is a lattice in $\tilde{H}$, where $g_iSL(4,\mathbb{Z})$ is a representative of the lattice $F_i$. For similar reasons that in the case of single orbits, we obtain a $\mathbb{Q}-$form for $\tilde{H}$ such that $\tilde{H}(\mathbb{Z})$ is commensurable to $\widetilde{\Lambda}_{F_1,F_2}$. This implies that $\Lambda_{F_1,F_2}$ is commensurable to an arithmetic subgroup $H(\mathbb{Z})$ for some $\mathbb{Q}-$form of $H$. \\ 

\begin{proof}(of the arithmeticity of $<F_1,F_2>$) By the closeness of the single orbits we obtain $\mathbb{Q}-$forms of $U$ and $U^{-}$, and $\mathbb{Q}-$forms of their normalizers $P=N(U)$ and $P^{-}=N(U^{-})$. The normalizer of $<F_1,F_2> \cap H$ is discrete and contains $\Lambda_{F_1,F_2}$, as  $\Lambda_{F_1,F_2}$ is a lattice in $H$ then $N(<F_1,F_2> \cap H)$ is a lattice in $H$, then there exists a $\mathbb{Q}-$form of $G$ such that $N(<F_1,F_2> \cap H) \subset G(\mathbb{Q})$ and $P(\mathbb{Q})$, $P^{-}(\mathbb{Q})$ correspond to the $\mathbb{Q}-$forms obtained before. In particular $<F_1,F_2> \subset G(\mathbb{Q})$, we can apply the Theorem \ref{rv} to obtain that $<F_1,F_2>$ is an arithmetic subgroup of $SL(3,\mathbb{C})$.       
\end{proof}

\backmatter





\bmhead{Acknowledgments}

I thank to A. Cano for all the useful conversations and for his great support.


\section*{Statements and Declarations}

\bmhead{Funding}
No funding was received to assist with the preparation of this manuscript.

\bmhead{Conflict of interest/Competing interests}

\textbf{Financial interests:} The author declares that he has no financial interest. 

\textbf{Non-financial interests:} None.

\bmhead{Ethics approval} Not applicable
\bmhead{Consent to participate} Not applicable
\bmhead{Consent for publication} Not applicable
\bmhead{Availability of data and materials} Not applicable
\bmhead{Code availability} Not applicable
\bmhead{Authors' contributions} Not applicable

\end{document}